\documentclass[12pt]{amsart}
\usepackage{amsmath,amssymb,enumerate}
\newtheorem{theorem}{Theorem}[section]
\newtheorem{claim}{}[theorem]
\newtheorem{lemma}[theorem]{Lemma}

\theoremstyle{definition}

\newcommand{\bZ}{\mathbb Z}

\newcommand{\cB}{\mathcal{B}}
\newcommand{\cC}{\mathcal{C}}
\newcommand{\cD}{\mathcal{D}}

\newcommand{\cP}{\mathcal{P}}

\newcommand{\cQ}{\mathcal{Q}}

\newcommand{\cU}{\mathcal{U}}

\newcommand{\cW}{\mathcal{W}}

\newcommand{\id}{\mathrm{id}}

\newcommand{\bal}{\mathsf{b}}
\newcommand{\ub}{\mathsf{u}}

\newcommand{\oa}[1]{\vec{#1}}
\newcommand{\kub}[1]{K^{\mathbf{u}}_{#1}}
\newcommand{\kosc}[1]{K^{\mathbf{o}}_{#1}}
\newcommand{\ka}[2]{K^{#1}_{#2}}
\newcommand{\bu}{\mathbf{u}}
\newcommand{\bo}{\mathbf{o}}
\allowdisplaybreaks[1]
\sloppy

\author{Peter Nelson}
\author{Sophia Park}
\address{Department of Combinatorics and Optimization,
University of Waterloo, Waterloo, Canada}

\title{A Ramsey Theorem for Biased Graphs}
\begin{document}
\maketitle

\begin{abstract}
	A \emph{biased graph} is a pair $(G,\cB)$, where $G$ is a graph and $\cB$ is a collection of `balanced' circuits of $G$ such that no $\Theta$-subgraph of $G$ contains precisely two balanced circuits. We prove a Ramsey-type theorem, showing that if $(G,\cB)$ is a biased graph which $G$ is a very large complete graph, then $G$ contains a large complete subgraph $H$ such that the set of balanced cycles within $H$ has one of three specific, highly symmetric structures, all of which can be described naturally via group-labellings. 
\end{abstract}

\section{Introduction}

A \emph{$\Theta$-subgraph} of a graph $G$ is a connected subgraph of $G$ whose edge set is the union of three edge-disjoint $xy$-paths for some pair of distinct vertices $x$ and $y$. Such a subgraph contains exactly three circuits of $G$. A \emph{biased graph} is a pair $(G,\cB)$ where $G$ is a graph and $\cB$ is a collection of circuits of $G$, called the `balanced' circuits, so that no $\Theta$-subgraph of $G$ 
contains precisely two circuits in $\cB$.

As was shown in Zaslavsky's work [\ref{zaslavsky}] defining these objects, a natural class of biased graphs arises by assigning orientations and labels from a group  to the edges of a graph $G$, and indeed, biased graphs arose as an abstraction of these `group-labelled graphs'. We explain the construction briefly here. For a group $\Gamma$, a \emph{$\Gamma$-labelled graph} is a pair $(\oa{G},\gamma)$, where $\oa{G}$ is an orientation of an undirected graph $G = (V,E)$, (that is, $G$ together with an assignment of a `head' in $\{u,v\}$ to each undirected edge $uv \in E$, towards which the edge is `oriented') and $\gamma\colon E \to \Gamma$ is a function. 

For each circuit $C$ of $G$ with vertices $v_1,\dotsc,v_k$ and edges $e_1,\dotsc,e_k$ where $v_i,v_{i+1}$ are the ends of $e_i$ and $v_{k+1} = v_1$, let $\pi(C) = \prod_{i=1}^k \gamma'(e_i)$, where $\gamma'(e_i) = \gamma(e_i)$ if $e_i$ is oriented from $v_i$ to $v_{i+1}$, and $\gamma'(e_i) = \gamma(e_i)^{-1}$ otherwise. We say that $C$ is \emph{balanced} if $\pi(C)$ is the identity in $\Gamma$; although $\pi(C)$ may depend on the particular choice of indexing of the vertices in $C$, associativity implies that the definition of balanced does not, even if $\Gamma$ is nonabelian. Zaslavsky showed that $(G,\cB)$ is always a biased graph for the set $\cB$ of circuits of $G$ that are balanced in the above sense. It is known, however, that not all biased graphs arise from group-labelled graphs in this way; in fact, [\ref{dfp}] shows that there exist infinite antichains (with respect to biased subgraph containment) of biased (multi-)graphs on three vertices that do not. 

A \emph{biased subgraph} of $(G,\cB)$ is a pair $(H,\cB')$ where $H$ is a subgraph of $G$, and $\cB'$ is the intersection of $\cB$ with the set of circuits of $H$. We prove the following Ramsey-type theorem for biased graphs; $K_n$ denotes the complete graph with vertex set $[n] = \{1,\dotsc,n\}$. 

\begin{theorem}\label{maingroup}
	For every integer $s \ge 1$, there exists an integer $n$ such that, if $(G,\cB)$ is a biased graph with $G \cong K_n$, then $(G,\cB)$ has a biased subgraph $(H,\cB')$ with $H \cong K_s$ that arises from a $\Gamma$-labelled graph for some finite cyclic group $\Gamma$. 
\end{theorem}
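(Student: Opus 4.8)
The plan is to combine an iterated use of Ramsey's theorem with a structural analysis of the resulting homogeneous biased complete subgraph.

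I would first record a basic observation: in a biased graph whose underlying graph is complete, if every triangle is balanced then so is every circuit. This goes by induction on circuit length — given a circuit $C=v_1v_2\cdots v_kv_1$ with $k\ge 4$, the chord $v_1v_3$ makes $C$ together with that chord into a $\Theta$-subgraph whose three circuits are $C$, the triangle $v_1v_2v_3$, and the $(k-1)$-circuit $v_1v_3v_4\cdots v_kv_1$; the latter two are balanced, so the $\Theta$-axiom forces $C\in\cB$. A biased graph in which every circuit is balanced arises from labelling every edge by the identity of the trivial group, so this case is settled in advance. Now fix $s$, take $n$ enormous, fix a linear order on $[n]$, and colour: the triples of $[n]$ by whether the triangle is balanced; then, inside a monochromatic set, the $4$-subsets by which of their three $4$-cycles are balanced; then, continuing inside a monochromatic set, for each $t$ up to a bound $T=T(s)$ the $t$-subsets by the labelled isomorphism type of the induced biased $K_t$. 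Each colouring uses boundedly many colours, so iterated Ramsey yields $H\subseteq[n]$ with $|H|$ as large as desired such that, for every $t\le T$, all $t$-subsets of $H$ induce isomorphic biased graphs.

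If the triangles of $H$ are balanced we are done by the observation above. So suppose every triangle of $H$ is unbalanced; the claim is that the biased graph induced on any $s$ vertices of $H$ is then one of a short, explicit list, each member of which is a biased graph (via Zaslavsky's construction) labellable by $\mathbb{Z}_2$ or by a large finite cyclic group. Representative members: (a) exactly the even circuits are balanced — labelling every edge by the nonidentity of $\mathbb{Z}_2$; (b) no circuit is balanced — a $\mathbb{Z}_m$-labelling ($m$ a large prime) whose edge-labels are chosen, by a union bound over the finitely many circuits, so that no circuit-sum vanishes; (c) writing $v_1<\cdots<v_s$ for the vertices, exactly the circuits that alternate up–down along this order are balanced — orienting each edge from its smaller to its larger end and giving $v_iv_j$ the label $\delta(i)$ for a generic $\delta\colon[s]\to\mathbb{Z}_m$. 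To establish the claim I would argue by increasing circuit length. The $4$-subset colour records, for $\{a<b<c<d\}$, which of the three $4$-cycles lie in $\cB$; bounding the number of balanced $4$-cycles in a copy of $K_{2,3}$ inside a $5$-subset of $H$ (its three $4$-cycles cannot be exactly two balanced) collapses the possible $4$-subset colours to a small number. In the case where every $4$-cycle of $H$ is balanced, the chord induction above upgrades to ``$\ell$-circuit balanced iff $\ell$ even'', giving (a). In the remaining cases one shows, by induction on $\ell$, that the balanced $\ell$-circuits are exactly those predicted by the length-$\le 4$ data: a balanced $\ell$-circuit contradicting the prediction would, by the homogeneity of $H$, come with many companions throughout $H$, and then a $\Theta$-subgraph spanning $\ell+O(1)$ vertices of $H$ would contain exactly two balanced circuits, violating the axiom.

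The principal obstacle is exactly this last inductive step in the cases where not every $4$-cycle is balanced. It cannot be carried out inside a single $K_s$: there are biased complete graphs in which all triangles and all $4$-cycles are unbalanced but some longer circuit is balanced (for instance a $\mathbb{Z}_m$-labelling subject to a single imposed relation), so homogeneity of the much larger host $H$ must be used in an essential way, and the real difficulty is to produce, for each over-length balanced circuit, the right auxiliary $\Theta$-subgraph exhibiting a forbidden ``exactly two balanced'' configuration — this is also what fixes how large $T$ and $H$ must be taken. The finite but delicate $K_{2,3}$-bookkeeping that limits the $4$-subset colours is a secondary obstacle.
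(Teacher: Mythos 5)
Your skeleton --- iterate Ramsey's theorem to obtain a set $H$ on which the induced ordered biased clique is homogeneous at every uniformity up to $T(s)$, then classify the homogeneous structures --- is viable, and is in fact the ``shorter proof, much worse bounds'' route the paper explicitly mentions (the paper itself only colours $4$-subsets with $4$ colours and does more structural work afterwards). The preliminary observations are also fine: ``all triangles balanced implies all circuits balanced'' by chord induction is correct, and the $K_{2,3}$-style bookkeeping restricting the possible $4$-subset colours is essentially the paper's first two claims in Lemma~\ref{slog}.

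The genuine gap is in the classification step, and it is not just the technical difficulty you flag --- your proposed list of outcomes is incomplete in a way that makes your inductive step aim at a false statement. For a circuit $C$ with canonical ordering $v_1,\dotsc,v_k$ let $\delta(C)$ be the absolute difference between the number of ascents and descents around $C$. For each $a\ge 3$, labelling every edge of the upward orientation of $K_n$ by a generator of $\bZ_a$ gives a biased clique $\ka{a}{n}$ whose balanced circuits are exactly those with $a\mid\delta(C)$; this is order-homogeneous (so it survives your Ramsey step), all its triangles are unbalanced, its balanced $4$-circuits are exactly the two ``$\delta=0$'' order types, and yet it has balanced circuits of length $5,6,\dotsc$ (e.g.\ $[1,2,3,4,5]$ when $a=3$). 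All of $\ka{3}{n},\ka{4}{n},\dotsc$ and $\ka{0}{n}$ agree on circuits of length at most $4$ but differ on longer ones, so the balance of long circuits is \emph{not} determined by the length-$\le 4$ data, and no $\Theta$-subgraph argument can rule out these ``over-length'' balanced circuits --- they are legitimate outcomes. What is missing is the invariant $\delta$ together with the paper's Lemmas~\ref{omegacomponents} and~\ref{constantlabelling}: once all $\delta=0$ four-circuits are balanced, (i) any two nonspanning circuits with equal $\delta$ are joined by a chain of $\Theta$-subgraphs each containing a balanced $4$-circuit, hence have the same balance status, and (ii) the set of balanced $\delta$-values contains $0$ and is closed under absolute differences, hence equals the set of multiples of some $a>0$. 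This produces the whole family $\ka{a}{s}$ of outcomes (still cyclic-group-labellable, so the theorem survives), which your argument as written cannot reach.
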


In fact, we refine the conclusion to show that $(H,\cB')$ arises from a graph with one of three very specific types of group-labelling, which we now define. 

Let $\oa{K_n}$ be the orientation of $K_n = ([n],E)$, where every edge with ends $i < j$ is oriented from $i$ to $j$. Let $m = \binom{n}{2}$ and $\Gamma$ be a cyclic group of order at least $2^m$ with generator $g$. Let $S \subseteq \Gamma$ be a set of size $m$ such that no two distinct subsets of $S$ have the same product; such a set $S$ exists, as we can take $S = \{g^{2^i} \colon 0 \le i < m\}$. Let $\gamma^{\mathbf{u}} \colon E \to \Gamma$ be a labelling assigning distinct elements of $S$ to each edge.  Let $\gamma^{\mathbf{o}} \colon E \to \Gamma$ be a labelling such that every edge receives a label in $S$, and two edges receive the same label if and only if they have the same head (that is, larger end). 

For each integer $a \ge 0$, let $\Gamma$ be a multiplicative cyclic group that is infinite if $a = 0$ and has size $a$ otherwise, and let $g$ be a generator for $\Gamma$. Let $\gamma^a \colon E \to \Gamma$ assign a label of $g$ to every edge. 

Let $\kub{n}, \kosc{n},\ka{a}{n}$ be the biased graphs of the group-labelled graphs corresponding to $\oa{K_n}$ and the labellings $\gamma^{\mathbf{u}},\gamma^{\mathbf{o}}$ and $\gamma^a$ (for $a \ge 0$) respectively. While the first two ostensibly depend on the choice of $\Gamma$ and $S$, Lemma~\ref{basicclasses} will show that in fact they do not. We can now state a refinement of our main theorem. 

\begin{theorem}\label{mainsimp}
	For every integer $s \ge 1$, there exists an integer $n$ such that, if $(G,\cB)$ is a biased graph with $G \cong K_n$, then $(G,\cB)$ has a biased subgraph isomorphic to either $\kub{s}$, $\kosc{s}$, or $\ka{a}{s}$ for some $a > 0$.
\end{theorem}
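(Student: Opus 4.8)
I want to prove Theorem~\ref{mainsimp}; Theorem~\ref{maingroup} then follows immediately.  The strategy is to set up a series of ``colourings'' of various complete structures associated with $G\cong K_n$ and apply the classical (hypergraph) Ramsey theorem repeatedly, so that on a large complete subgraph all the relevant data becomes homogeneous.  Throughout I fix the natural orientation $\oa{K_n}$ and, for a triangle $T$ on vertices $i<j<k$, I record which of its three circuits lie in $\cB$.  The key structural engine is the $\Theta$-condition: in any $\Theta$-subgraph exactly $0$, $1$, or $3$ of its circuits are balanced.  In particular, for each vertex set of size $4$, the four triangles it spans, together with the three ``quadrilateral'' circuits, form a collection of $\Theta$-subgraphs that heavily constrains the pattern of balanced triangles.

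**Step 1: reduce to understanding triangles.**  First I would argue that, on a large enough clique, the biased graph is determined by its balanced \emph{triangles}.  Colour each $4$-subset of $[n]$ by the isomorphism type (as a biased graph, respecting the orientation-induced vertex order) of the $K_4$ it spans.  Applying Ramsey to this colouring, I pass to a large clique on which all $K_4$'s look the same.  Now I claim that on such a clique, a circuit $C$ of length $\ge 4$ is balanced if and only if its ``triangulation'' behaves consistently — more precisely, using the $\Theta$-condition along a chord of $C$, balance of a $k$-circuit is determined by balance of a $(k-1)$-circuit and a triangle, so by induction every circuit's status is forced by the triangle data. This is where one must be a little careful: the deduction has to be set up so that it is \emph{independent} of which chord is chosen, which again follows from the $\Theta$-condition applied to the two triangles and two shorter circuits created by two chords.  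So after Step~1 we only need to classify the homogeneous patterns of balanced triangles.

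**Step 2: classify homogeneous triangle-patterns, and match them to $\kub{s},\kosc{s},\ka{a}{s}$.**  On the clique obtained above, a triangle on $i<j<k$ has its balance status depending only on ``relative order type'', which is trivial for triangles — so either \emph{every} triangle is balanced or \emph{no} triangle is balanced would be too naive; instead the relevant finer invariant comes from looking at how balanced triangles interact across $4$-sets.  Here I would do a short case analysis on the fixed $K_4$-type.  The $\Theta$-condition forces this type to be one of a very small list: (a) no balanced triangles at all — but then longer circuits may still be balanced, and tracking \emph{which} ones, via the length, gives exactly the patterns $\ka{a}{s}$ (balance depends only on circuit length mod $a$, where $a$ is the ``order'' detected); (b) the ``generic'' type where balanced triangles are as unstructured as possible — this will be $\kub{s}$, the universal/free pattern in which no nontrivial circuit is balanced except those forced, matching the sum-distinct set $S$; and (c) an intermediate type governed by a single vertex-parameter (the head), giving $\kosc{s}$.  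For each case I must then \emph{exhibit} the cyclic group-labelling: given the homogeneous pattern, I define $\Gamma$ and $\gamma$ on the clique, check that a circuit is $\gamma$-balanced iff it is in $\cB'$ (using Step~1's reduction to triangles plus the explicit description of $\gamma^{\bu},\gamma^{\bo},\gamma^a$), and invoke Lemma~\ref{basicclasses} to identify the result with the named biased graph up to isomorphism.

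**Main obstacle.**  The routine part is the Ramsey bookkeeping; the real work is Step~2 — proving that the $\Theta$-condition, applied across all $4$- and $5$-subsets, genuinely forces the homogeneous triangle-pattern into exactly those three families and \emph{nothing else}.  Concretely, the hard lemma is: if on a large clique every $K_4$ is isomorphic and the pattern is not of type $\ka{a}{}$ or $\kosc{}$, then it must be $\kub{}$; equivalently, one must rule out every other conceivable homogeneous $4$-vertex balance pattern by finding a $\Theta$-subgraph (possibly living in a $5$- or $6$-vertex subgraph, using longer circuits and the Step~1 deductions) with exactly two balanced circuits.  I expect this to require examining on the order of a dozen candidate patterns and, for each bad one, pinpointing the offending $\Theta$.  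A secondary subtlety is making sure the group-labellings constructed in the three good cases are well-defined on the \emph{whole} clique simultaneously (not just locally on each $K_4$), which is where the sum-distinctness of $S$ and the ``same head $\Rightarrow$ same label'' rule do their job, and where one uses that balance of long circuits was already pinned down in Step~1.
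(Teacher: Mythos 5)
There is a genuine gap, and it sits exactly where your plan leans hardest. Your Step 1 claims that, via the $\Theta$-condition along a chord, ``balance of a $k$-circuit is determined by balance of a $(k-1)$-circuit and a triangle, so by induction every circuit's status is forced by the triangle data.'' This is false. The $\Theta$-property only forbids \emph{exactly two} balanced circuits in a $\Theta$-subgraph: if both shorter circuits are balanced the long one is forced to be balanced, and if exactly one is balanced the long one is forced unbalanced, but if \emph{neither} shorter circuit is balanced then nothing is determined. This is not a technicality: in all three target families the triangle data is essentially empty (no triangle is oscillating, so $\kosc{s}$ has no balanced triangles; $\kub{s}$ has none; and a triangle has $\delta=1$, so $\ka{a}{s}$ has balanced triangles only when $a=1$). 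So $\kub{s}$ and, say, $\ka{2}{s}$ have identical triangle data but wildly different sets of balanced circuits, and your induction never gets off the ground. Relatedly, your description of the third family is wrong: in $\ka{a}{s}$ balance is governed by $a\mid\delta(C)$, where $\delta$ is the positive-minus-negative edge count determined by the \emph{order type} of the circuit, not by circuit length modulo $a$; and $\kub{s}$ is not a ``generic'' pattern of balanced triangles but the biased graph with \emph{no} balanced circuits at all.

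The part you defer as ``the real work'' is precisely where the paper's content lies, and it is not a finite case check on $K_4$-types plus local gluing. The paper's Ramsey step colours each $4$-set only by the balance of its $1324$- and $1243$-circuits, and the three outcomes are reached by two nontrivial arguments your sketch does not contain: a minimization over ``bad'' (balanced but non-oscillating) circuits, replacing one coordinate of a transversal at a time, to force the oscillating outcome; and, in the remaining case, a connectivity analysis of an auxiliary graph on circuits (Lemma~\ref{omegacomponents}) showing that once all $1243$- and $1324$-circuits are balanced, the balance of a \emph{nonspanning} circuit depends only on $\delta$, and the set of balanced $\delta$-values is closed under differences, yielding $\ka{a}{}$. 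Note also the word ``nonspanning'': Theorem~\ref{counterexamples} shows Hamilton circuits genuinely misbehave, so one must pass to a proper subclique at the end --- a subtlety your plan, which tries to pin down all circuits at once from local data, has no mechanism to handle.
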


In all three cases, there are simple combinatorial descriptions for the set of balanced cycles in the subgraph. We state these here, and will prove them in Lemma~\ref{basicclasses}. For a $k$-circuit $C$ of $K_n$, let $v_1, \dotsc, v_k$ be the unique ordering of $V(C)$ for which $E(C) = \{v_1v_2,\dotsc,v_{k-1}v_k,v_kv_{1}\}$, and $v_1 = \min(C)$ while $v_2 < v_k$. Set $v_0 = v_k$. For $i \in [k]$, we say that the edge $v_{i-1}v_i$ is \emph{positive} in $C$ if $v_i > v_{i-1}$, and \emph{negative} otherwise. We will show that
\begin{itemize}
	\item $\kub{s}$ has no balanced circuits, 
	\item a circuit $C$ is balanced in $\kosc{s}$ if and only if the edges of $C$ alternate between positive and negative, and 
	\item a circuit $C$ is balanced in $\ka{a}{s}$ if and only if $a|d$, where $d$ is the difference between the number of positive edges in $C$ and the number of negative edges in $C$. 
\end{itemize}

Note that if $a = 1$ in the third case, then every circuit is balanced, and if $a = 2$ then precisely the even circuits are balanced; these natural cases are therefore both possible outcomes of Theorem~\ref{mainsimp}.

The function $n = n(s)$ implicitly defined in Theorem~\ref{mainsimp} depends on Ramsey numbers for $4$-colouring the edges of a complete $4$-uniform hypergraph, and therefore corresponds to a tower of exponents of constant height. One could shorten our proof in exchange for much worse numbers (invoking Ramsey's theorem for $f(s)$-uniform hypergraphs with $g(s)$ colours, where $f$ and $g$ are respectively linear and exponential in $s$). We also prove a bipartite version of our main theorem. Here the outcomes simplify and our bound is small enough to include explicitly. 

\begin{theorem}\label{mainbp}
	Let $t \ge 1$. If  $n \ge 2^{2^{2^{4t}}}$ and $(G,\cB)$ is a biased graph with $G \cong K_{n,n}$, then $(G,\cB)$ has a biased subgraph $(H,\cB')$ with $H \cong K_{t,t}$ such that $\cB'$ is either empty, or the set of all circuits in $H$. 
\end{theorem}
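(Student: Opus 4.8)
The plan is to pass, by iterated applications of Ramsey's theorem, to a large complete bipartite subgraph on which the balance of every short cycle is governed by its combinatorial ``shape'', and then to analyse the two possible uniform behaviours of $4$-cycles directly via the $\Theta$-axiom.

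Write $A,B$ for the two sides of $G\cong K_{n,n}$, each identified with $[n]$. A cycle of length $2\ell$ uses an $\ell$-subset $I'$ of $A$ and an $\ell$-subset $J'$ of $B$, and is determined by $I'$, $J'$ and one of the finitely many cyclic arrangements it realises relative to the orders on $I'$ and $J'$; call this last datum its \emph{shape}. For each $\ell\in\{2,\dots,t\}$ in turn I would colour each pair $(I',J')$ of $\ell$-subsets of the current bipartite subgraph by the function recording, for every shape, whether the cycle of that shape on $(I',J')$ is balanced, and apply a bipartite form of the hypergraph Ramsey theorem to obtain a large sub-grid on which this colour is constant. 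Iterating, one reaches a sub-grid $G_0$ with sides $I_0\subseteq A$, $J_0\subseteq B$, $|I_0|=|J_0|=t+1$, such that for every $\ell\le t$ and every shape $\sigma$, either the $\sigma$-shaped cycle of \emph{every} $\ell$-subgrid of $G_0$ is balanced or that of \emph{none} is. (Done one length at a time this costs a tower of exponentials whose height grows with $t$; squeezing out the stated bound $2^{2^{2^{4t}}}$, a tower of height three, is exactly the delicate point, discussed at the end.)

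On $G_0$ the $4$-cycles are either all balanced or all unbalanced. If all are balanced, then every cycle of $G_0$ is balanced: induct on the length $2k$, the base $k=2$ being the hypothesis, and for $k\ge 3$ take $C=v_1\cdots v_{2k}$ and form the $\Theta$-subgraph consisting of $C$ together with the chord $v_1v_4$ (an edge, since $v_1,v_4$ lie on opposite sides and $K_{n,n}$ is complete bipartite); its three circuits are the $4$-cycle $v_1v_2v_3v_4$, a $(2k-2)$-cycle, and $C$, the first balanced by hypothesis and the second by induction, so the $\Theta$-axiom forces $C$ to be balanced. Hence any $K_{t,t}$ sub-grid of $G_0$ has $\cB'$ equal to the set of all its circuits. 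Suppose instead every $4$-cycle of $G_0$ is unbalanced; I claim no cycle of $G_0$ of length $\le 2t$ is balanced, so that any $K_{t,t}$ sub-grid has $\cB'=\emptyset$. If not, pick a balanced $2\ell$-cycle $C$ with $3\le\ell\le t$. Since $C$ uses only $\ell<t+1$ vertices on each side there is a spare vertex on $B$ (say), and there is a vertex $w$ of $C$ on $B$ with the same order-rank among the $B$-vertices of $C$ as the spare vertex $w'$ has among those together with $w'$; swapping $w$ for $w'$ yields a cycle $C'$ of the same shape as $C$, hence balanced by the uniformity of $G_0$. Now $C$ and $C'$ share the path $P=C-w$ joining the two $C$-neighbours $x,y$ of $w$, and $P$ together with the internally disjoint $2$-paths $xwy$ and $xw'y$ forms a $\Theta$-subgraph whose third circuit is the $4$-cycle $xwyw'$. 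Two of these three circuits, $C$ and $C'$, are balanced, so the $\Theta$-axiom forces $xwyw'$ to be balanced, contradicting the case hypothesis.

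In either case a $K_{t,t}$ sub-grid of $G_0$ — which exists since $|I_0|=|J_0|=t+1$, and whose cycles all have length $\le 2t$ — has $\cB'$ either empty or all of its circuits, as required. The main obstacle is the bound: the length-by-length reduction above produces a tower of height linear in $t$, whereas the theorem asserts height three. To recover it one must replace the iterated reduction by a single Ramsey application of bounded uniformity and show that it already controls cycles of all lengths — e.g.\ that after uniformising $4$-cycles (and perhaps $6$-cycles) the $\Theta$-propagation and the rank-preserving-swap argument bootstrap to every length with no further colouring. Making that bootstrap precise, together with the two routine-but-fiddly points — that a rank-preserving swap is always available and that the shape of a cycle is genuinely invariant under order-isomorphisms of the two sides — is where the real work lies.
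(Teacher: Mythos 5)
Your argument, as far as it goes, is essentially sound: the rank-preserving swap in the all-4-cycles-unbalanced case does work (with the small fix that when the spare vertex $w'$ is larger than every $B$-vertex of $C$ you swap it for the maximum), and the $\Theta$-induction in the all-balanced case is exactly the paper's final step. But the theorem asserts the explicit bound $n\ge 2^{2^{2^{4t}}}$, and you concede that your shape-uniformisation via iterated hypergraph Ramsey produces a tower of height linear in $t$ and that recovering the stated bound ``is where the real work lies.'' That missing work is the actual content of the proof, so this is a genuine gap, not a fiddly detail.

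The paper closes the gap by avoiding hypergraph Ramsey entirely. First, for a \emph{fixed} pair $X=\{a_1,a_2\}$ on the $A$-side, the relation on $B$ given by ``the $4$-circuit on $\{u,v\}\cup X$ is balanced'' is an equivalence relation: for distinct $u,v,w\in B$ the three $4$-circuits $C_{uv},C_{uw},C_{vw}$ form a $\Theta$-subgraph, so balance is transitive. An equivalence relation on $B_k$ has a class or a transversal of size at least $\sqrt{|B_k|}$, so uniformising the $4$-circuits through one pair costs only a square root, not a logarithm; iterating over the $\binom{4^t}{2}<2^{4t}$ pairs inside a $4^t$-subset $A_0\subseteq A$ leaves $n^{2^{-2^{4t}}}$ vertices of $B$, and a single application of graph Ramsey $R_2(t,t)\le 4^t$ on $A_0$ then makes \emph{all} $4$-circuits of $(G,\cB)|(A'\cup B')$ consistent. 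Second, longer cycles are never uniformised by Ramsey at all: one takes $B''\subseteq B'$ maximal with every circuit of $G|(A'\cup B'')$ unbalanced, and if $|B''|<t$ a pigeonhole on the at most $2(2t-1)!$ underlying paths produces two balanced circuits differing in one vertex, whose $\Theta$-subgraph yields a balanced $4$-circuit — forcing the all-balanced case, which your induction already handles. Your swap argument plays the same role as this pigeonhole, but only after the expensive uniformisation of all cycle shapes, which is precisely what must be avoided to get a tower of height three.
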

As part of the work we do to get our better bound in Theorem~\ref{mainsimp}, we obtain a result of independent interest, Lemma~\ref{constantlabelling}, that gives a nearly complete characterisation of any biased complete graph in which only certain $4$-circuits are required to be balanced. We state a slightly simplified version here. For a biased graph $(G,\cB)$ and a vertex $v$ of $G$, we write $(G,\cB) - v$ for the biased subgraph $(H,\cB')$ with $H = G -v$.

\begin{theorem}
	Let $n \ge 5$ and $(G,\cB)$ be a biased graph with $G = K_{n}$. If, for all $v_1,v_2,v_3,v_4 \in V(G)$ for which either $v_1<v_2<v_4<v_3$ or $v_1<v_3<v_2<v_4$, the circuit with edge set $\{v_1v_2,v_2v_3,v_3v_4,v_4v_1\}$ is balanced, then $(G,\cB)-n = \ka{a}{n-1}$ for some $a > 0$. 
\end{theorem}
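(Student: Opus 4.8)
The plan is to use the defining property of a biased graph in the contrapositive form: among the three circuits of any $\Theta$-subgraph of $G$, the number of balanced ones is never exactly two (so if two are balanced then so is the third, and if one is balanced and another is not then the third is not). I will apply this to $\Theta$-subgraphs obtained by adding a chord to a circuit, or by rerouting one edge of a circuit through an extra vertex. Write $\cB'$ for the set of circuits of $H:=G-n$ that lie in $\cB$, and for a circuit $C$ let $d(C)$ be the integer of Lemma~\ref{basicclasses}, so that in $\ka a m$ a circuit is balanced iff $a\mid d(C)$; recall $d(C)\equiv |C|\pmod 2$ and $|d(C)|\le |C|-2$, and note that for a $4$-set the two circuits required balanced by the hypothesis are exactly the two with $d=0$, the third having $d=2$. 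First I would show all triangles of $G$ have a common status: for any $4$-set, deleting a suitable edge of the $K_4$ on it yields a $\Theta$-subgraph whose circuits are two of the triangles on that set and one of the (balanced) $d=0$ $4$-circuits, forcing those two triangles to agree; running over the $K_4$ and then chaining over $4$-sets gives the claim. If all triangles are balanced, splitting off a triangle by a chord and inducting on length shows every circuit of $G$ is balanced, so $(G,\cB)-n=\ka 1{n-1}$. Hence assume no triangle is balanced.

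Next I would show the $d=2$ $4$-circuits all have a common status. Two such $4$-circuits whose vertex sets differ in one element form, together with the chord identifying them, a $\Theta$-subgraph whose third circuit is—by a short case check on the order of the vertices—always one of the balanced $d=0$ $4$-circuits; so the two agree, and chaining over $4$-sets (this is where $n\ge 5$ enters) gives the claim. If this common status is ``balanced'', then all $4$-circuits are balanced, and cutting off a $4$-circuit when $|C|$ is even and a triangle when $|C|$ is odd shows the balanced circuits of $G$ are exactly the even ones, so $(G,\cB)-n=\ka 2{n-1}$. So assume in addition that every $d=2$ $4$-circuit is unbalanced.

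In this remaining case I claim $(G,\cB)-n=\ka a{n-1}$, where $a:=\min\{\,|d(C)|:C\in\cB',\ d(C)\neq 0\,\}$ if this set is nonempty, and $a:=n$ otherwise (this value is chosen so that, for circuits of $K_{n-1}$, $a\mid d(C)$ iff $d(C)=0$, since these have $|d(C)|\le n-3$). One first proves, by a single induction on length that simultaneously handles all three statements, that every circuit with $d=0$ is balanced, every circuit with $|d|=1$ is unbalanced, and every circuit with $|d|=2$ is unbalanced: for $|d|=1$ one uses the elementary fact that such a circuit contains two consecutive equally-oriented edges, cuts these off as a triangle to leave a shorter circuit with $d=0$ (balanced by induction), and applies the $\Theta$-axiom; the $|d|=2$ case is similar using a $d=0$ $4$-circuit; and for $d=0$ one uses that a circuit of length $\ge 6$ with $d=0$ admits a chord splitting it into two circuits each with $d=0$, both balanced by induction. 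In particular $a\ge 3$. Finally, one shows $C\in\cB'$ iff $a\mid d(C)$ for all circuits $C$ of $K_{n-1}$ by induction on $|C|$: it suffices to find a chord of $C$ one of whose two resulting circuits has $d\equiv 0\pmod a$, since that circuit is balanced by induction and the $\Theta$-axiom then forces $C$ (balanced if $a\mid d(C)$, unbalanced otherwise); such a chord exists for all but the ``convex'' circuits $u_1<\dots<u_k$ (which have $d=k-2$), the short ones among which are handled by the chord $u_1u_{a+2}$, and the longest ones of which are forced by comparison with other circuits realizing the value $a$.

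The main obstacle is the family of combinatorial realizability facts invoked above—that a circuit of prescribed length and $d$-value lying inside $K_{n-1}$ admits a chord cutting off a piece with a prescribed $d$-value, with the convex circuits essentially the only exceptions—together with sequencing the inductions so that every circuit is reached. This is precisely where $n\ge 5$ is needed, and it is also why the clean conclusion is stated for $(G,\cB)-n$ rather than for $(G,\cB)$ itself: the longest circuits, which must pass through the largest vertex $n$, are exactly the ones whose balance is not pinned down by the stated hypotheses. Assembling the cases, $(G,\cB)-n$ is $\ka 1{n-1}$, $\ka 2{n-1}$, or $\ka a{n-1}$ with $a\ge 3$, and in every case it is a copy of $\ka a{n-1}$ for some $a>0$.
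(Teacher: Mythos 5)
Your overall strategy (reduce everything to the statement that a circuit's balance status depends only on $\delta$, then identify the set of balanced $\delta$-values as the multiples of some $a$) is the right one, and several of your steps are sound: the triangle-agreement argument via $K_4$ minus an edge, the $|d|=1$ reduction via two consecutive equally-oriented edges in the majority direction, and the closure argument for convex circuits with $\delta>a$ all check out. But there is a genuine gap at exactly the point you compress into ``forced by comparison with other circuits realizing the value $a$.'' Your $a$ is defined as a minimum of $|\delta|$ over \emph{some} balanced circuit $C^*$, but your induction on $|C|$ never determines the status of the convex circuit $[1,\dotsc,a+2]$ (and more generally of any circuit with $\delta=a$ that your chord moves cannot reach): for that circuit every chord produces two convex pieces with $\delta$ strictly between $0$ and $a$, so neither piece is balanced and the $\Theta$-axiom gives no information. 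To conclude it is balanced you must \emph{transfer} balancedness from the unknown witness $C^*$ to this specific circuit, i.e., you must show that any two nonspanning circuits with the same $\delta$ are linked by a chain of $\Theta$-subgraphs whose third circuit is a balanced ($d=0$) $4$-circuit. That transfer statement is precisely Lemma~\ref{omegacomponents} of the paper, whose proof (the reduction of an arbitrary circuit to a ``basic'' one by controlled vertex replacements) is the bulk of the work; it is not a routine comparison, and without it your identification of $\cB'$ with $\{C: a\mid\delta(C)\}$ is incomplete. A related, smaller inaccuracy: two $d=2$ $4$-circuits whose vertex sets differ in one element do \emph{not} always form a $\Theta$-subgraph (if the new vertex falls into a different ``interval'' the union has cycle rank $3$); only order-preserving single-vertex swaps work, so the chaining there needs to be routed through such swaps.

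Secondarily, the ``combinatorial realizability facts'' you flag as the main obstacle are not merely bookkeeping. The claims that every $\delta=0$ circuit of length $\ge 6$ has a chord splitting it into two $\delta=0$ pieces, and that every non-convex circuit has a chord with one piece of $\delta\equiv 0\pmod a$, each require an argument (the relevant condition is the existence of an odd-length subpath $P$ from $x$ to $y$ with $\delta(P)=+1$ and $y>x$, or $\delta(P)=-1$ and $y<x$ --- note that the sign of $\delta(P)$ does not determine the comparison between the endpoints). I verified the length-$6$, $\delta=0$ case by hand and it holds, but the general statements, and in particular the claim that convex circuits are the \emph{only} exceptions in your final induction, are unproved and are where the real content lies. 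For comparison, the paper avoids this entire case analysis by proving the single connectivity statement of Lemma~\ref{omegacomponents} once, after which Lemma~\ref{constantlabelling} follows from a three-line closure-under-differences argument; I'd recommend either proving a statement of that strength or restructuring your induction so that it never needs to compare two circuits of the same length and the same $\delta$.
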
  

The single vertex deletion turns out to be necessary; in fact,  Theorem~\ref{counterexamples} will show that if the statement is strengthened to assert that $(G,\cB)$ itself has the form $\ka{a}{n}$, then there are doubly exponentially many counterexamples.

\section{Ordered biased graphs}

Our biased graphs hereon are always complete. While we could phrase all our results in terms of graphs $K_n$ with vertex set $\{1, \dotsc, n\}$, when passing to subgraphs it is more convenient to state them in terms of biased graphs with a fixed linear ordering on the vertex set; this will also allow us to state a slightly stronger, `ordered' version of our main theorem. An \emph{ordered graph} is a pair $(G,\le)$ where $G$ is a graph and $\le$ is a total ordering on $V(G)$, and an \emph{ordered biased graph} is a triple $(G,\cB,\le)$, where $(G,\cB)$ is a biased graph and $(G, \le)$ is an ordered graph. Two ordered biased graphs $(G_1,\cB_1,\le_1)$ and $(G_2,\cB_2,\le_2)$ are \emph{isomorphic} if there is a graph isomorphism from $G_1$ to $G_2$ that preserves both the property of being balanced for a circuit, and the ordering of the vertices. We defined our special biased graphs $K^{\mathbf{u}}(n),K^{\mathbf{o}}(n)$ and $K^a(n)$ to each have a fixed vertex set $[n]$; from here on we view them as ordered biased graphs with the usual ordering on $[n]$.

We write either $[v_1, \dotsc, v_k]$, or $[v_i\colon i \in \bZ_k]$ for the circuit with edge set $\{v_kv_1, v_1v_2, \dotsc, v_{k-1}v_k\}$. Each $k$-circuit has $2k$ such representations. Given a circuit $C$ in an ordered graph $(G,\le)$, define the \emph{canonical ordering} of $C$ to be the unique sequence $v_1, \dotsc, v_k$ for which $C = [v_1, \dotsc, v_k]$ and $v_1 = \min(V(C))$ while $v_2 < v_k$; this distinguished ordering was considered but not named in the introduction. We say that two $k$-circuits $C,C'$ are \emph{similar} in $(G,\le)$ if their canonical orderings $v_1, \dotsc ,v_k$ and $v_1',\dotsc, v_k'$ are such that $v_i < v_j$ if and only if $v_i' < v_j'$. This is an equivalence relation on the set of all $k$-circuits of $G$, and for each $k$-element subset $X$ of $G$, each equivalence class contains precisely one circuit with vertex set $X$. 

We will be especially concerned with two special similarity classes of $4$-circuits. Call a $4$-circuit $C$ a \emph{1324-circuit} if its canonical ordering $v_1,v_2,v_3,v_4$ satisfies $v_1<v_3<v_2<v_4$ and define a \emph{1243-circuit} similarly. Note that if $V = [n]$ with $n \ge 4$, then $[1,3,2,4]$ and $[1,2,4,3]$ are natural examples of these two types of circuit. 

A circuit $[v_i \colon i \in \bZ_k]$ is \emph{oscillating} if for each $i \in \bZ_k$, the vertex $v_i$ satisfies either $v_i < \min(v_{i-1},v_{i+1})$ or $v_i > \max(v_{i-1},v_{i+1})$. This clearly does not depend on the choice of indexing, and is equivalent to the definition earlier that the edges of $C$ alternate between `positive' and `negative'. Oscillating $4$-circuits are precisely the $1243$-circuits.

For each circuit $C = [v_i\colon i \in \bZ_k]$, let $\delta(C) = |\delta_0(C)|$, where
\[\delta_0(C) = |\{i \in \bZ_k \colon v_i > v_{i-1}\}| - |\{i \in \bZ_k \colon v_i < v_{i-1}\}|. \]
Cyclically permuting the $v_i$ does not change either term in $\delta_0$, while reversing the order of the $v_i$ swaps the two terms, changing only the sign of $\delta_0$; thus, the parameter $\delta(C)$ is a well-defined function of $C$ itself, not on the given indexing of its vertices. Fixing the ordering of $V(C)$ to be the canonical one, we see that the value of $\delta$ is the absolute difference between the number of `positive' and `negative' edges in $C$ as defined in the introduction. Oscillating circuits clearly satisfy $\delta = 0$, since the two sets defining $\delta_0$ have equal size. Note that $\delta(C) \le |V(C)|-2$ and that $\delta(C)$ and $|V(C)|$ have the same parity.

Given these definitions, we can now prove the combinatorial characterisations of the special biased graphs $\kub{n},\kosc{n}$ and $\ka{a}{n}$ that were stated earlier. Recall that they all arise from group-labellings of the orientation $\oa{K_n}$ of $K_n$ in which edges are oriented `upwards'; the definitions of the relevant labellings are recalled in the proof below. 

\begin{lemma}\label{basicclasses}
	If $n \ge 1$ is an integer, then 
	\begin{enumerate}[(i)]
		\item\label{ubcase} The set of balanced circuits of $K^{\bu}(n)$ is empty,
		\item\label{osccase} The balanced circuits of $K^{\bo}(n)$ are precisely the oscillating circuits of $(K_n,\le)$, and
		\item\label{onecase} if $a \ge 0$ is an integer, then the balanced circuits of $K^a(n)$ are precisely the circuits $C$ of $(K_n,\le)$ for which $a|\delta(C)$. 
	\end{enumerate}
\end{lemma}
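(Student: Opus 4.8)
The plan is to handle each of the three cases by directly computing $\pi(C)$ for an arbitrary circuit $C$ under the relevant labelling of $\oa{K_n}$, exploiting the fact that all three labellings assign the upward orientation, so that a positive edge $v_{i-1}v_i$ (with $v_{i-1} < v_i$) contributes $\gamma(v_{i-1}v_i)$ to the product while a negative edge contributes $\gamma(v_{i-1}v_i)^{-1}$. Throughout I fix the canonical ordering $v_1, \dots, v_k$ of $C$ and set $v_0 = v_k$, so that $\pi(C) = \prod_{i=1}^k \gamma'(v_{i-1}v_i)$ where the prime records the inversion on negative edges; since the target group $\Gamma$ is abelian in all three constructions, I may reorder this product freely, which is what makes the computation clean.

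For part~(\ref{ubcase}): under $\gamma^{\bu}$ the edges receive \emph{distinct} elements of the set $S$, and no edge appears twice in $C$, so $\pi(C)$ is a product of the form $\prod_{e \in P} \gamma^{\bu}(e) \cdot \prod_{e \in N} \gamma^{\bu}(e)^{-1}$ over the (disjoint, nonempty since $k \ge 3$ forces at least one edge of each sign) sets $P$ of positive and $N$ of negative edges. If this equalled the identity, then the product over the multiset of labels on $P$ would equal the product over the labels on $N$; but the labels involved form two distinct subsets of $S$ (distinct because $P$ and $N$ are disjoint and nonempty, and $\gamma^{\bu}$ is injective), contradicting the defining property of $S$ that no two distinct subsets have the same product. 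Hence no circuit is balanced. For part~(\ref{osccase}): under $\gamma^{\bo}$ two edges get the same $S$-label iff they share a head, i.e.\ the same larger endpoint. Writing the product and using commutativity, $\pi(C) = \prod_{x} g_x^{\,c(x)}$ where $g_x \in S$ is the label of head-class $x$, the product ranges over vertices $x$ that occur as the larger end of some edge of $C$, and $c(x)$ counts edges of $C$ with larger end $x$ weighted $+1$ if that edge is positive in $C$ and $-1$ if negative. The key local observation is that each vertex $v_i$ of $C$ is the larger end of the edge $v_{i-1}v_i$ when $v_i > v_{i-1}$ and of the edge $v_iv_{i+1}$ when $v_i > v_{i+1}$, and for each $i$ the edge $v_{i-1}v_i$ is positive iff $v_i > v_{i-1}$ while $v_iv_{i+1}$ is negative iff $v_i > v_{i+1}$; a short case check on the two incident edges shows $c(v_i) \in \{-1,0,1\}$, with $c(v_i) = 0$ precisely when one incident edge is larger-above and one larger-below, i.e.\ when $v_i$ is neither a strict local min nor a strict local max. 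So $c \equiv 0$ identically iff $C$ is oscillating; and when $C$ is not oscillating, there is some $x$ with $c(x) \ne 0$ and, because the $g_x$ are distinct elements of $S$ (so that distinct formal products over $S$-elements are distinct as subsets-with-multiplicity --- here I need the slightly stronger statement that the map $S \times \mathbb Z^{|S|} \to \Gamma$ is such that a nonzero integer combination of distinct generators with small exponents is nonidentity, which follows from $|\Gamma| \ge 2^m$ and the $2^i$ construction, or more directly from treating exponents in binary), $\pi(C) \ne \id$.

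For part~(\ref{onecase}): $\gamma^a$ assigns $g$ to every edge, so $\pi(C) = g^{\,p - q}$ where $p$ and $q$ are the numbers of positive and negative edges of $C$; by definition $|p - q| = \delta(C)$, so $\pi(C) = \id$ iff $\ord(g) \mid (p-q)$ iff $\ord(g) \mid \delta(C)$. When $a > 0$ we have $\ord(g) = a$ and the claim is immediate; when $a = 0$, $\Gamma$ is infinite so $\pi(C) = \id$ iff $p - q = 0$ iff $\delta(C) = 0$, which is exactly ``$0 \mid \delta(C)$'' under the usual convention, matching the $a = 0$ reading of the statement. I should also remark, to discharge the claim made just before the lemma, that $\kub n$ and $\kosc n$ do not depend on the choice of $(\Gamma, S)$: parts~(\ref{ubcase}) and~(\ref{osccase}) exhibit the balanced-circuit sets purely combinatorially, independent of those choices, and a biased graph on a fixed vertex set is determined by its set of balanced circuits, so the group-labelled graphs coincide as biased graphs.

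The routine parts --- the explicit products and the $a = 0$ bookkeeping --- are straightforward. The one step needing genuine care is the abelian-group arithmetic in cases~(\ref{ubcase}) and~(\ref{osccase}): I must make sure that ``a nontrivial $\pm 1$-weighted combination of distinct elements of $S$ is non-identity'' is actually guaranteed by the hypotheses on $S$ and $\Gamma$. For~(\ref{ubcase}) this is literally the ``distinct subsets have distinct products'' hypothesis (applied to $P$ versus $N$). For~(\ref{osccase}) the combination $\sum_x c(x)$ with $c(x) \in \{-1,0,1\}$ can be rewritten, after moving the $-1$'s to the other side, as an equality between products over two disjoint subsets of $S$ --- again covered by the same hypothesis --- so no extra assumption on $\Gamma$ beyond what is stated is needed. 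Making this reduction explicit is the crux; everything else is formal.
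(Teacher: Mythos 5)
Your proof is correct and takes essentially the same route as the paper's: compute $\pi(C)$ under each labelling and reduce parts (\ref{ubcase}) and (\ref{osccase}) to the hypothesis that distinct subsets of $S$ have distinct products; in particular, your final paragraph's reduction of the $\{-1,0,1\}$-valued exponent function in (\ref{osccase}) to a comparison of two disjoint subsets of $S$ is precisely the paper's argument, and no strengthened assumption on $\Gamma$ is needed. One sentence in your treatment of (\ref{osccase}) is stated backwards: the case check gives $c(v_i)=0$ precisely when $v_i$ \emph{is} a strict local minimum or maximum of $C$ (a local minimum is the larger end of neither incident edge, and a local maximum is the larger end of both, contributing $+1$ and $-1$), while $c(v_i)=\pm 1$ exactly when $v_i$ lies strictly between its neighbours --- not the reverse, as you wrote. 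Since the conclusion you then draw ($c\equiv 0$ iff $C$ is oscillating) is the correct one, this is a wording slip rather than a gap, but it should be corrected.
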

\begin{proof}
	Let $G = (V,E) \cong K_n$. Let $\Gamma$ be a cyclic group and $S \subseteq \Gamma$ be an $\binom{n}{2}$-element set whose subsets all have distinct products. 
	
	Let $\gamma = \gamma^{\mathbf{u}} \colon E \to \Gamma$ be a function assigning distinct elements of $S$ to each edge, so $\kub{n}$ arises from $(\oa{K_n},\gamma^{\bu})$. . Let $C = [v_1, \dotsc, v_k]$ be a cycle of $K_n$. Then $\pi(C)= \prod_{i \in \bZ_k} (\gamma^{\bu}(v_iv_{i+1}))^{-b_i}$ where each $b_i$ is $-1$ or $1$. So $\pi(C)$ is the identity if and only if $\prod_{i\colon b_i = 1}\gamma^{\bu}(v_iv_{i+1}) = \prod_{i \colon b_i = -1}\gamma^{\bu}(v_{i}v_{i+1})$. The two sides of this equality are products of distinct subsets of $S$, so this cannot occur; thus, all circuits of $K^{\bu}(n)$ are unbalanced, as required.
	
	Now let $\gamma^{\bo}\colon E \to \Gamma$ be a function assigning an element of $S$ to each edge, such that two edges receive the same label if and only if their larger end (head) is the same, so $\kosc{n}$ arises from $(\oa{K_n},\gamma^{\bo})$. For each $v \in [n]$, let $s_v \in S$ be the label assigned to the edges whose head is $v$. Let $C = [v_i \colon i \in \bZ_k]$. Since each edge is oriented from its minimum to its maximum, we have
	\begin{align*}
	\pi(C) &= \prod_{i \in \bZ_k \colon v_{i+1} > v_i}\gamma^{\bo}(v_iv_{i+1})\prod_{i \in \bZ_k \colon v_{i+1} < v_i}\gamma^{\bo}(v_iv_{i+1})^{-1} \\
	&= \left(\prod_{i \in \bZ_k \colon v_{i+1} > v_i}\gamma_{i+1}\right)\left(\prod_{i \in \bZ_k \colon v_{i+1} < v_i}\gamma_i\right)^{-1}. \\
	\end{align*}
	Note that since $v_1, \dotsc, v_k$ are distinct, each of the two terms is a product of distinct elements of $S$. Therefore, by the choice of $S$, the circuit $C$ is balanced if and only if $\{i \in \bZ_k\colon v_{i+1} > v_i\} = \{i \in \bZ_k\colon v_{i+1} < v_i\}$. This occurs precisely when each $v_i$ either satisfies $v_i > \min(v_{i-1},v_{i+1})$ or $v_i > \max(v_{i-1},v_{i+1})$: that is, when $C$ is oscillating. This gives (\ref{osccase}). 
	
	Finally, let $a \ge 0$ be an integer and let $\Gamma_a$ be a cyclic group with generator $g$ such that $\Gamma$ is infinite if $a = 0$, and $|\Gamma|$ has order $a$ otherwise. Let $\gamma^a\colon E \to \Gamma_a$ be the labelling assigning $g$ to every edge, so $\ka{a}{n}$ arises from $(\oa{K_n},\gamma^a)$. Then, for a cycle $C = [v_i\colon i \in \bZ_{k}]$, we have
	\begin{align*}
	\pi(C) &= \prod_{i \in \bZ_k \colon v_{i+1} > v_i}\gamma^{a}(v_iv_{i+1})\prod_{i \in \bZ_k \colon v_{i+1} < v_i}\gamma^{a}(v_iv_{i+1})^{-1} \\
	&= g^{|\{i \in \bZ_k\colon v_{i+1} > v_i\}| - |\{i \in \bZ_k\colon v_{i+1} < v_i|\}} = g^{\delta(C)}.
	\end{align*}
	Since $g^{\delta(C)}$ is the identity if and only if $a|\delta(C)$, the characterisation (\ref{onecase}) follows. 
\end{proof}

We also prove an easy folklore estimate for the number of (unordered) paths and circuits in a complete graph. This only serves to make the bounds in our final theorem slightly cleaner.  
\begin{lemma}\label{pathscycles}
$K_n$ has at most $2n!$ paths and at most $2(n-1)!$ circuits. 
\end{lemma}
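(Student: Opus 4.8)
The plan is to compute exact formulas for the number of $k$-vertex paths and $k$-vertex circuits of $K_n$ and then bound the resulting sums crudely; the only arithmetic fact needed is $\sum_{j\ge 0}1/j! = e < 3$.

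First I would count paths. A path on a prescribed $k$-element vertex set (with $k\ge 2$) is specified by an ordering $v_1,\dots,v_k$ of that set, and exactly two orderings — a sequence and its reverse — give the same path; hence $K_n$ has exactly $\tfrac12\binom{n}{k}k! = \tfrac{n!}{2(n-k)!}$ paths on $k$ vertices. Summing over $k$ from $2$ to $n$, and including the at most $n+1$ paths on at most one vertex if one counts those, gives at most
\[ \frac{n!}{2}\sum_{j=0}^{n-2}\frac{1}{j!} + (n+1) \;<\; \frac{e}{2}\,n! + (n+1) \;<\; 2\,n!, \]
where the final inequality uses $e/2<2$ and holds for $n\ge 4$; the cases $n\le 3$ are immediate by direct inspection.

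For circuits the same bookkeeping shows that $K_n$ has exactly $\binom{n}{k}\cdot\tfrac{(k-1)!}{2} = \tfrac{n!}{2k(n-k)!}$ circuits on $k$ vertices, since each $k$-circuit has $2k$ vertex-orderings representing it. Summing over $k$ from $3$ to $n$, the total number of circuits equals $\tfrac{(n-1)!}{2}\sum_{k=3}^{n}\tfrac{n}{k(n-k)!}$, so it suffices to prove $\sum_{k=3}^{n}\tfrac{n}{k(n-k)!}<4$. The naive estimate $\tfrac1k\le\tfrac13$ is too wasteful here — it yields only a bound of order $\tfrac{e}{6}n!$, which exceeds $2(n-1)!$ once $n$ is moderately large, because the circuits of $K_n$ are concentrated at values of $k$ near $n$. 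Instead I would write $\tfrac{n}{k}=1+\tfrac{n-k}{k}\le 1+\tfrac{n-k}{3}$ (using $k\ge 3$), so that
\[ \sum_{k=3}^{n}\frac{n}{k(n-k)!} \;\le\; \sum_{k=3}^{n}\frac{1}{(n-k)!} + \frac{1}{3}\sum_{k=3}^{n-1}\frac{1}{(n-k-1)!} \;<\; e + \frac{e}{3} \;<\; 4, \]
whence $K_n$ has fewer than $2(n-1)!$ circuits.

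The only real obstacle is the one just flagged: one must not discard the factor $1/k$ in the circuit count, since the dominant contributions come from $k$ close to $n$; keeping it, in the mild form $\tfrac{n}{k}\le 1+\tfrac{n-k}{3}$, is exactly what makes the constant $2$ go through. Everything else is routine double-counting, and the whole lemma is included only to tidy the constants in the bipartite theorem, so no sharper estimate is needed.
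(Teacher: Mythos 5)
Your proof is correct. The path count is essentially identical to the paper's (sum $\binom{n}{k}k!/2$ over $k$ and bound by $\tfrac{e}{2}n!$), but your circuit count takes a genuinely different route. The paper verifies $n\le 4$ by hand and then inducts via the recursion $c_n \le c_{n-1}+p_{n-1}$, obtained by observing that each circuit through vertex $n$ corresponds to a path of $K_{n-1}$; this lets it reuse the path bound and avoid estimating the circuit sum directly, at the cost of a base case and the slightly opaque condition $n-1>4/(4-e)$. You instead sum the exact formula $\tfrac{n!}{2k(n-k)!}$ and handle the problematic factor $n/k$ by writing $\tfrac{n}{k}\le 1+\tfrac{n-k}{3}$, which splits the sum into two pieces each bounded by $e$, giving $c_n<\tfrac{4e}{6}(n-1)!\cdot\tfrac{3}{2}\cdot\tfrac{1}{1}$ --- more precisely $c_n<\tfrac{(n-1)!}{2}(e+\tfrac{e}{3})<2(n-1)!$ uniformly in $n\ge 3$ with no induction or base cases. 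I checked the arithmetic: the reindexed second sum $\sum_{k=3}^{n-1}1/(n-k-1)!<e$ is right, and $\tfrac{4e}{3}<4$ closes the argument. Your diagnosis that the naive bound $1/k\le 1/3$ fails because the mass sits at $k$ near $n$ is exactly the issue the paper's induction is also engineered around; your fix is arguably cleaner, the paper's is shorter to write given that $p_{n-1}$ has already been bounded.
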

\begin{proof}
	Let $p_n,c_n$ denote the number of paths and circuits respectively in $K_n$. Using $4-e > 1$, we have $p_n = n+ \sum_{i=2}^n \binom{n}{i}\tfrac{i!}{2} < \tfrac{1}{2} n + \tfrac{e}{2}n! < 2n!$.
	That $c_n \le 2(n-1)!$ is easy to verify for $n \le 4$; suppose that $n \ge 5$. Each circuit of $K_n$ that is not a circuit of $K_{n-1}$ corresponds to a nonempty path of $K_{n-1}$, together with vertex $n$ added, so $c_n \le p_{n-1} + c_{n-1}$ for each $n \ge 4$. Now, we inductively get $c_n \le c_{n-1} + p_{n-1} \le 2(n-2)! + \tfrac{e}{2}(n-1)! < 2(n-1)!$ (where we use $n -1 > \tfrac{4}{4-e}$), and the bound follows. 
\end{proof}

\section{Bipartite Graphs}

In this section and what follows, we use the following statement of Ramsey's theorem, as well as the very well-known bound $R_2(t,t) \le 2^{2t}$. 

\begin{theorem}[Ramsey's Theorem]\label{ramsey}
	There is a function $R_k(n_1,\dotsc,n_{\ell})$ so that, if $Z$ is a finite set with $R_k(n_1,\dotsc,n_{\ell})$ and each element of $\binom{Z}{k}$ is assigned a colour from $\{c_1,\dotsc,c_{\ell}\}$, then there is some $i \in [\ell]$ and a set $X \subseteq Z$ for which $|X| = k_i$ and each set in $\binom{X}{k}$ receives colour $c_i$. 
\end{theorem}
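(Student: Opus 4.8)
The plan is to prove the finite multicolour hypergraph Ramsey theorem by a nested induction, building the bounding function $R_k(n_1,\dotsc,n_\ell)$ recursively so that its finiteness drops out of the proof; here I read the hypothesis as ``$|Z|\ge R_k(n_1,\dotsc,n_\ell)$'' and the conclusion as ``$|X|=n_i$''. The outermost induction is on the uniformity $k$. The base case $k=1$ is the pigeonhole principle: a colouring of $\binom Z1$ is a colouring of the points of $Z$, and $|Z|\ge 1+\sum_{i=1}^\ell(n_i-1)$ forces some colour class to contain at least $n_i$ points, so one may take $R_1(n_1,\dotsc,n_\ell)=1+\sum_{i=1}^\ell(n_i-1)$.

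For the inductive step, fix $k\ge 2$ and assume $R_{k-1}$ has been defined for every number of colours. I would first settle the two-colour case by an inner induction on $n_1+n_2$, the cases $n_1<k$ or $n_2<k$ being vacuous (take any $n_i$ points; an $n_i$-set has no $k$-subsets). Given a $2$-colouring of $\binom Zk$, fix a point $v\in Z$ and define an auxiliary $2$-colouring of $\binom{Z\setminus\{v\}}{k-1}$ by giving $S$ the colour of $S\cup\{v\}$. Applying $R_{k-1}$ with parameters $R_k(n_1-1,n_2)$ and $R_k(n_1,n_2-1)$, provided $|Z|>R_{k-1}\bigl(R_k(n_1-1,n_2),\,R_k(n_1,n_2-1)\bigr)$ we find (say) a set $Y\subseteq Z\setminus\{v\}$ of size $R_k(n_1-1,n_2)$ all of whose $(k-1)$-subsets have the first colour in the auxiliary colouring. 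Now apply the inner inductive hypothesis to the original colouring restricted to $Y$: either some $n_2$-subset of $Y$ has all its $k$-subsets of the second colour, and we are done, or some $(n_1-1)$-subset $W$ of $Y$ has all its $k$-subsets of the first colour, in which case $W\cup\{v\}$ has $n_1$ points and each of its $k$-subsets has the first colour --- those avoiding $v$ lie inside $W$, and those containing $v$ are of the form $S\cup\{v\}$ with $S\in\binom{Y}{k-1}$. The symmetric case is identical, so the recursion
\[ R_k(n_1,n_2)=1+R_{k-1}\bigl(R_k(n_1-1,n_2),\,R_k(n_1,n_2-1)\bigr) \]
(together with $R_k(n_1,n_2)=n_i$ when $n_i<k$) is well-defined, and a trivial sub-induction on $n_1+n_2$ shows it terminates.

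To pass from two colours to $\ell\ge 3$ at the same uniformity $k$, I would induct on $\ell$: merge colours $c_1$ and $c_2$ into one, apply the $(\ell-1)$-colour result with parameters $R_k(n_1,n_2),n_3,\dotsc,n_\ell$, and either extract a monochromatic set of the required size in one of $c_3,\dotsc,c_\ell$, or obtain a set $X$ with $|X|=R_k(n_1,n_2)$ all of whose $k$-subsets use only $c_1$ and $c_2$, to which the two-colour case applies. Thus $R_k(n_1,\dotsc,n_\ell)=R_k\bigl(R_k(n_1,n_2),n_3,\dotsc,n_\ell\bigr)$ suffices.

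The only thing requiring real care is checking that the three layers of induction --- on $k$, on $\ell$, and on $n_1+n_2$ --- are well-founded and non-circular: the two-colour case at uniformity $k$ calls only $R_{k-1}$, while the multicolour reduction at uniformity $k$ calls the (already established) two-colour case at uniformity $k$ together with the $(\ell-1)$-colour case at the same uniformity. The one substantive idea, adding $v$ back to a monochromatic set found in the ``link'' colouring, is the classical device and is immediate once that colouring is set up; beyond keeping the recursion bookkeeping straight, I do not expect a genuine obstacle.
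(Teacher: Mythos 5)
Your argument is the classical Erd\H{o}s--Szekeres induction (outer induction on the uniformity $k$ via the link colouring at a fixed vertex, inner induction on $n_1+n_2$ for two colours, then colour-merging for $\ell\ge 3$), and it is correct; the bookkeeping of the three nested inductions is handled properly and the recursion is well-founded. The paper itself offers no proof of this statement --- it cites Ramsey's theorem as a known classical result --- so there is nothing to compare against beyond noting that yours is the standard proof of exactly the statement being invoked (reading the paper's typos ``with $R_k(n_1,\dotsc,n_\ell)$'' and ``$|X|=k_i$'' as ``with at least $R_k(n_1,\dotsc,n_\ell)$ elements'' and ``$|X|=n_i$'', as you did).
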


Call a set of circuits $\cC$ in a biased graph $(G,\cB)$ \emph{consistent} if either $\cC \subseteq \cB$ or $\cC \cap \cB = \varnothing$. Call a biased graph $(G,\cB)$ consistent if the set of all its circuits is consistent. For a set $X \subseteq V(G)$, write $(G,\cB)|X$ for the biased subgraph $(H,\cB')$ where $H$ is the subgraph of $G$ induced by $X$. We now restate and prove Theorem~\ref{mainbp}.

\begin{theorem}
	Let $t \ge 1$ be an integer. If $n \ge 2^{2^{2^{4t}}}$ and $(G,\cB)$ is a biased graph with $G \cong K_{n,n}$, then $(G,\cB)$ has a consistent $K_{t,t}$-subgraph.
\end{theorem}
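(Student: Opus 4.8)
The goal is to find a large balanced bipartite subgraph $K_{t,t}$ on which the biased structure is as simple as possible — every circuit balanced, or none. The key structural fact for bipartite graphs is that $K_{n,n}$ has no $\Theta$-subgraph of small complexity to worry about in the same way, but more usefully, the $4$-circuits generate everything: any circuit's balance is determined by the balance of $4$-circuits via the $\Theta$-condition. So the plan is first to use Ramsey's theorem to find a large $K_{m,m}$ on which \emph{all} $4$-circuits are consistent (either all balanced, or all unbalanced), and then to bootstrap from $4$-circuits to all circuits using the no-two-balanced-circuits-in-a-$\Theta$ axiom.

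\begin{proof}
Write the two sides of $G \cong K_{n,n}$ as $A$ and $B$, each of size $n$. Fix arbitrary linear orders on $A$ and on $B$. A $4$-circuit of $G$ uses two vertices $a_1 < a_2$ of $A$ and two vertices $b_1 < b_2$ of $B$; colour the $4$-element set $\{a_1,a_2,b_1,b_2\}$ by whether the unique $4$-circuit on these vertices is balanced or not. This is a $2$-colouring of a subfamily of $\binom{V(G)}{4}$, but we can extend it (colour the $4$-sets not of the form ``two in $A$, two in $B$'' arbitrarily, say with a third colour) to a colouring of all of $\binom{V(G)}{4}$, and then apply Ramsey's theorem. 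Since $n \ge 2^{2^{2^{4t}}}$, iterating the bound $R_2(t,t)\le 2^{2t}$ three times (once to balance the bipartition sizes at each level, or more carefully tracking the $4$-uniform Ramsey number, which is at most a triple exponential) yields a set $X$ with $|X \cap A|, |X \cap B| \ge m$ where $m$ is large (say $m \ge 4t$, comfortably) and all $4$-circuits of $G|X$ receive the same colour. Replace $G$ by this $K_{m,m}$; so now \emph{every} $4$-circuit of $G$ is balanced, or every $4$-circuit is unbalanced.

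\textbf{Case 1: every $4$-circuit is balanced.} I claim $(G,\cB)$ is consistent with every circuit balanced (and then any $K_{t,t}$-subgraph works, since $m \ge t$). Proceed by induction on circuit length: suppose every circuit of length $< 2k$ is balanced, and let $C = [v_1, \dots, v_{2k}]$ be a $2k$-circuit, $k \ge 3$. Pick a chord of $C$ — say the edge $e$ joining $v_1$ and $v_4$, which lies in $G$ since $v_1, v_4$ are on opposite sides of the bipartition. Then $C$ together with $e$ forms a $\Theta$-subgraph whose three circuits are $C$ itself, the $4$-circuit $[v_1,v_2,v_3,v_4]$, and the $(2k-2)$-circuit $[v_1,v_4,v_5,\dots,v_{2k}]$. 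The latter two are balanced by hypothesis (the $4$-circuit by the Case~1 assumption, the shorter circuit by induction), so the $\Theta$-axiom — no $\Theta$ contains exactly two balanced circuits — forces $C$ to be balanced as well. This completes the induction.

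\textbf{Case 2: every $4$-circuit is unbalanced.} Here the right comparison is a group-labelled model where no $4$-circuit is balanced; the cleanest route is to show directly that no circuit at all is balanced, so that $\cB = \varnothing$ and again any $K_{t,t}$-subgraph is consistent. Again induct on length: suppose every circuit of length $< 2k$ is unbalanced, let $C=[v_1,\dots,v_{2k}]$ with $k \ge 3$, and take the same chord $e = v_1v_4$. The three circuits of the resulting $\Theta$ are $C$, the $4$-circuit $[v_1,v_2,v_3,v_4]$, and the $(2k-2)$-circuit $[v_1,v_4,\dots,v_{2k}]$; the second is unbalanced by the Case~2 assumption and the third is unbalanced by induction, so at most one circuit of this $\Theta$ (namely $C$) could be balanced, and by the $\Theta$-axiom a $\Theta$ with exactly one balanced circuit is forbidden — hence $C$ is unbalanced. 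Thus $\cB = \varnothing$ on $G$, and the conclusion holds. In both cases the resulting $K_{t,t}$-subgraph (indeed all of $G|X$ restricted to any $t+t$ vertices) is consistent, as required.
\end{proof}

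\textbf{The main obstacle.} The two induction arguments above are routine once set up; the real work is bookkeeping the Ramsey bound. I claimed $n \ge 2^{2^{2^{4t}}}$ suffices, but one must check this honestly: the $4$-uniform Ramsey number $R_4(\cdot)$ is a tower of height controlled by $R_2$, and one must verify the stated triple-exponential bound accommodates the three colours and the need to keep \emph{both} sides of the bipartition large simultaneously (one can iterate a bipartite Ramsey statement, or embed into the ordinary $4$-uniform one and throw away a constant fraction). A second, subtler point worth double-checking: in the inductive steps I used a chord of the form $v_1v_4$, which is a genuine edge of the \emph{complete} bipartite graph precisely because $v_1$ and $v_4$ lie on opposite sides — this is why the argument is clean for $K_{n,n}$ and is exactly the place the bipartiteness is used.
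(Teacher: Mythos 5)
Your Case 2 contains a fatal error: you write that ``a $\Theta$ with exactly one balanced circuit is forbidden,'' but the biased-graph axiom forbids a $\Theta$-subgraph with exactly \emph{two} balanced circuits. A $\Theta$ may contain $0$, $1$, or $3$ balanced circuits. So knowing that two of the three circuits in your $\Theta$ are unbalanced tells you nothing about the third, and the induction collapses. Worse, the statement you are trying to prove in Case 2 is simply false: ``all $4$-circuits unbalanced'' does not imply ``all circuits unbalanced.'' For instance, label the edges of $\overrightarrow{K_{3,3}}$ by real numbers chosen generically subject to one alternating $6$-cycle sum vanishing; this is a group-labelled (hence genuinely biased) graph in which all nine $4$-circuits are unbalanced but one $6$-circuit is balanced. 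This is exactly the point where the paper's proof does real work: it takes a maximal $B''\subseteq B'$ such that $G|(A'\cup B'')$ has \emph{no} balanced circuit of any length; if $|B''|\ge t$ it is done, and otherwise a pigeonhole argument over the at most $2(2t-1)!$ paths in $A'\cup B''$ produces two balanced circuits $[b,x_1,\dots,x_\ell]$ and $[b',x_1,\dots,x_\ell]$, whose $\Theta$ forces the $4$-circuit $[b,x_1,b',x_\ell]$ to be balanced; consistency of the $4$-circuits then flips everything into your Case 1. You need an argument of this kind to handle balanced long circuits living above an all-unbalanced $4$-circuit structure.

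Two further points. First, your Case 1 induction is correct and matches the paper's closing argument. Second, your Ramsey reduction is not actually carried out: assigning a third colour to $4$-sets that do not meet both sides twice does not work, because Ramsey's theorem may then return a large monochromatic set in that third colour lying (almost) entirely inside $A$, which is useless; you would need a genuinely bipartite (product) Ramsey statement, and you have not verified that $2^{2^{2^{4t}}}$ survives that detour. The paper avoids hypergraph Ramsey altogether: for a fixed pair $X\subseteq A_0$, the relation ``the $4$-circuit on $\{u,v\}\cup X$ is balanced'' is an equivalence relation on $B$ (because the three $4$-circuits on $\{u,v,w\}\cup X$ form a $\Theta$), so each of the $\binom{4^t}{2}$ pairs costs only a square root of $|B|$, followed by a single application of $R_2(t,t)\le 4^t$ on the $A$-side; this is what makes the triple-exponential bound go through.
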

\begin{proof}
	Let $n_0 = 4^t$; note that since $\binom{n_0}{2} < 2^{4t-1}$, we have $n^{2^{-\binom{n_0}{2}}} >2^{2^{t}} > t + 2(2t-1)!$. Let $(A,B)$ be a bipartition of $G$. Let $A_0$ be an $n_0$-element subset of $A$. Let $X_1,X_2, \dotsc, X_{\binom{n_0}{2}}$ be an enumeration of $\binom{A_0}{2}$. Let $k$ be maximal so that $0  \le k \le \binom{n_0}{2}$ and there is a set $B_k \subseteq B$ with $|B_k| \ge n^{2^{-k}}$ such that $(G,\cB)|(X_i \cup B_k)$ is consistent for each $1 \le i \le k$.
	
	Suppose that $k < \binom{n_0}{2}$. Let $H$ be a graph with vertex set $B_k$ in which vertices $u,v$ are adjacent if and only if the $4$-circuit $C_{uv}$ with vertex set $\{u,v\} \cup X_{k+1}$ is balanced. Since the $4$-circuits $C_{uv},C_{uw}$ and $C_{vw}$ form a $\Theta$-graph for all distinct $u,v,w$, adjacency in $H$ is an equivalence relation, so there is a set $B_{k+1} \subseteq B_k$ with $|B_{k+1}| \ge \sqrt{|B_k|} \ge n^{2^{-(k+1)}}$ that either contains no edges of $H$, or induces a complete subgraph of $H$. Now $(G,\cB)|(X_i \cup B_{k+1})$ is consistent for each $1 \le i \le k+1$, so the existence of $B_{k+1}$ contradicts the maximality of $k$. 
	
	We may therefore assume that $k = \binom{n_0}{2}$; let $B' = B_{k}$, so $|B'| \ge n^{2^{-k}} > t + 2(2t-1)!$ as noted earlier. Now $(G,\cB)|(X \cup B')$ is consistent for each $X \in \binom{A_0}{2}$; since $n_0 = 4^t \ge R_2(t,t)$, there is a $t$-element set $A' \subset A_0$ such that the set of $4$-circuits of $(G,\cB)|(A' \cup B')$ is consistent. 
	
	Let $B'' \subseteq B'$ be maximal so that every circuit in $G|(A' \cup B'')$ is unbalanced. We may assume that $|B''| < t$, as otherwise $G|(A' \cup B'')$ has a consistent $K_{t,t}$-subgraph; thus $|A' \cup B''| \le 2t-1$. By maximality, each $b \in B' - B''$ is in some balanced circuit $[b,x_1, \dotsc, x_{\ell}]$ where $x_1, \dotsc, x_{\ell}$ is a path. There are at most $2(2t-1)! < |B' -B''|$ such paths, so there exist $b,b' \in B''-B'$ and $x_1, \dotsc, x_{\ell} \in A' \cup B''$ for which $[b,x_1, \dotsc, x_{\ell}] \in \cB$ and $[b',x_1 \dotsc, x_{\ell}] \in \cB$. By the $\Theta$-property we have $[b,x_1,b',x_{\ell}] \in \cB$; since the set of $4$-circuits of $G|(A' \cup B')$ is consistent, it follows that every $4$-circuit of $G|(A' \cup B')$ is balanced. 
	
	We now argue that every circuit of $G|(A' \cup B')$ is balanced; since $|B'| \ge |A'| = t$, this implies the result. Indeed, a minimum-length unbalanced circuit $C = [c_1, \dotsc, c_{2k}]$ must have length greater than $4$, but then $[c_1,c_4,c_5, \dotsc, c_k]$ and $[c_1,c_2,c_3,c_4]$ are smaller (and thus balanced) circuits forming a theta with $C$, a contradiction. 	
\end{proof}

\section{Cliques}

We first prove a Ramsey-type result that will essentially find one of our three unavoidable examples in a large biased clique; however, recognising that the final outcome is (almost) $\ka{a}{t}$ for some $a$ will require a lemma from the next section. The proof of the following lemma uses the estimates from Lemma~\ref{pathscycles}.

\begin{lemma}\label{slog}
	Let $r,s,t \ge 1$ and $n \ge R_4(3r!,3s!,5,t)$. If $(G,\cB,\le)$ is an ordered biased graph with $G \cong K_n$, then either
	\begin{itemize}
		\item $G$ contains a $K_r$-subgraph in which no circuit is balanced,
		\item $G$ contains a $K_s$-subgraph in which the balanced circuits are precisely the oscillating circuits, or 
		\item $G$ contains a $K_t$-subgraph in which all $1243$-circuits and $1324$-circuits are balanced. 
	\end{itemize}
\end{lemma}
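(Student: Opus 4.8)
The plan is to apply Ramsey's theorem for $4$-uniform hypergraphs to the $4$-element subsets of $V(G)$, where the colour of a $4$-set $X$ records the "balance pattern" of the circuits inside $X$. The key observation is that the $\Theta$-property severely constrains which balance patterns are possible on a fixed $4$-set: a $4$-set $X$ spans a $K_4$, which contains three $4$-circuits (one of each similarity type: $1234$, $1243$, $1324$) and four triangles, and every pair of $4$-circuits together with a third circuit of $K_4$ forms a $\Theta$-subgraph. So I would first do a short case analysis, using only the $\Theta$-property within a single $K_4$, to enumerate the possible "types" a $4$-set can have. I expect the relevant distinctions to be: (a) $X$ contains no balanced circuit; (b) the balanced circuits of $X$ are exactly its oscillating ($1243$) circuit, possibly together with triangles, in some controlled way; (c) both non-$1234$ $4$-circuits (the $1243$ and the $1324$) are balanced; and a bounded number of other types. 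The number of colours is some absolute constant (at most $2^7$ crudely, but the $\Theta$-constraints cut this down — certainly at most the $5$ implicit in the statement after merging types appropriately), which is why the Ramsey number $R_4(3r!,3s!,5,t)$ appears: three of the colour classes need only bounded size $3r!$ or $3s!$ (these will be the "degenerate" patterns that we will show cannot persist on a large clique, via the path/circuit counts of Lemma~\ref{pathscycles}), and the fourth needs size $t$ and will be the genuine outcome.

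Concretely: after applying Ramsey, we get a large monochromatic clique $K_m$ in which every $4$-set has the same type $\tau$. Now I argue case by case on $\tau$. If $\tau$ says every $4$-set has no balanced circuit, then in particular no $4$-circuit of $K_m$ is balanced; a minimum-length balanced circuit would have to have length $\ge 5$, but it forms a $\Theta$ with a chord-$4$-circuit and a shorter circuit, and running the usual minimality argument (as at the end of the proof of Theorem~\ref{mainbp}) shows no balanced circuit exists at all, giving the first bullet with $K_r$ (indeed with $K_m \supseteq K_r$). If $\tau$ says the balanced $4$-circuits are exactly the oscillating ($1243$) ones, I claim the balanced circuits of $K_m$ are exactly the oscillating circuits: one inclusion is the $\Theta$-closure argument showing any balanced circuit of length $\ge 5$ can be "reduced", and conversely an oscillating circuit of length $\ge 6$ is built up from oscillating $4$-circuits via $\Theta$'s; this needs a little care with the canonical orderings but is combinatorial bookkeeping, and yields the second bullet. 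If $\tau$ says both the $1243$- and $1324$-circuits of every $4$-set are balanced, that is exactly the third bullet for $K_m \supseteq K_t$.

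The remaining monochromatic types are the "bad" ones that we want to rule out on a large clique — e.g. a type where some but not all $4$-circuits are balanced in a pattern incompatible with (b), or where triangles are balanced in a way that conflicts with the $4$-circuit pattern. Here is where Lemma~\ref{pathscycles} enters, exactly as in the bipartite proof: if such a type forced, say, every vertex of $K_m$ to lie in a balanced circuit through a fixed small vertex set, or forced a proliferation of short balanced circuits, then because $K_m$ has only $2(m-1)!$ circuits total while the structural constraint would demand more (or demand two circuits sharing a long sub-path, producing a forbidden $\Theta$ with exactly two balanced circuits), we get a contradiction once $m$ exceeds the relevant factorial bound — this is precisely why the first two colour thresholds are $3r!$ and $3s!$ rather than $r$ and $s$. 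So these bad types simply cannot occur on a clique larger than a constant times a factorial, and Ramsey with the stated parameters avoids them.

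The main obstacle I anticipate is the bookkeeping in the $4$-set case analysis: correctly enumerating the $\Theta$-admissible balance patterns on a $K_4$ (tracking all three $4$-circuits and four triangles simultaneously), and then, for each surviving monochromatic type, running the right "build-up / reduce-down" argument to pin the balanced circuits of $K_m$ down exactly — in particular verifying that oscillating circuits of all lengths are forced balanced in case (b), and that in the "no balanced $4$-circuit" case nothing longer sneaks in. Recognising that the third outcome is genuinely (almost) $\ka{a}{t}$ is deferred, as the lemma statement notes, to machinery from the next section, so I would not attempt it here.
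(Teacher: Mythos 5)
Your skeleton matches the paper's: colour each $4$-set by the balance status of its special $4$-circuits, apply Ramsey with thresholds $3r!,3s!,5,t$, and case-analyse the monochromatic type. (For the record, the paper uses exactly four colours --- the pairs in $\{\bal,\ub\}^2$ recording whether the $1324$- and $1243$-circuits on the $4$-set are balanced; the ``$5$'' is not a count of colours but the threshold for the colour $(\ub,\bal)$, which is killed outright by a single $\Theta$-graph on five vertices. That impossible class is absent from your enumeration.) The genuine gap is in your two hard cases, and it is the same gap both times: a $\Theta$-subgraph containing exactly \emph{one} balanced circuit is perfectly legal, so you cannot ``reduce'' a long balanced circuit whose sub-circuits are all unbalanced. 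In the ``no balanced $4$-circuit'' case you claim a minimum-length balanced circuit $C$ of length $\ge 5$ gives a contradiction via a $\Theta$ with a chordal $4$-circuit and a shorter circuit, ``as at the end of the proof of Theorem~\ref{mainbp}''. That analogy is false: there the two shorter circuits are balanced, so the $\Theta$ has exactly two balanced circuits; here both are unbalanced, so the $\Theta$ has one, and nothing is violated. A biased clique with no balanced short circuit can still have long balanced circuits. This is exactly why the threshold is $3r!$ rather than $r$: the paper takes a maximal balanced-circuit-free $X\subseteq U$, and when $|X|<r$ uses the path count of Lemma~\ref{pathscycles} plus a pigeonhole over the intervals of $U\setminus X$ to produce two balanced circuits sharing a path, whose $\Theta$ forces a balanced $1324$- or $1243$-circuit, contradicting the colour. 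You do gesture at such a counting argument, but you assign it to unspecified ``other bad types'' rather than to this case, where it is indispensable, and the direct argument you give for the first bullet is wrong.

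The same failure recurs in your oscillating case. The inclusion ``every balanced circuit is oscillating'' cannot be obtained by chord-reduction, for the identical reason, and in fact the statement is false for the whole monochromatic clique --- one can only guarantee it on a well-chosen $s$-subset (compare Theorem~\ref{counterexamples}, where long circuits misbehave). The paper's handling of this colour class is the longest part of the proof: it fixes blocks $U_0,\dots,U_s$, ranges over all transversals $W$, minimises the number of balanced-but-non-oscillating circuits, and uses the $\Theta$-property to show that a single vertex of a bad circuit can be swapped out ($|A_C|\le 1$ for each circuit, beaten by $|A|=2(s-1)!$) to strictly decrease the count, contradicting minimality. Nothing in your proposal supplies this mechanism, and ``combinatorial bookkeeping with canonical orderings'' will not, because the obstruction is structural rather than notational. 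Two minor points: the oscillating $4$-circuits are the $1324$-circuits, not the $1243$-circuits (so the second outcome arises from the colour ``$1324$ balanced, $1243$ unbalanced''), and including triangles in your colouring is harmless in principle but means the four thresholds of $R_4(3r!,3s!,5,t)$ no longer line up with your colour classes as stated.
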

\begin{proof}
	Suppose for a contradiction that $G$ has none of the above subgraphs. 
	Let $(m_{\bal,\bal},m_{\bal,\ub},m_{\ub,\bal},m_{\ub,\ub}) = (3r!,3s!,5,t)$. For each $Q \in \binom{V}{4}$, let $c(Q) = (\alpha,\beta)$, where $\alpha = \bal$ if the unique $1423$-circuit on $X$ is balanced, and $\alpha = \ub$ otherwise, and $\beta \in \{\bal,\ub\}$ is defined similarly for the unique $1243$-circuit on $X$. By the definition of $n$, there is a subgraph $H$ of $G$ and some $(\alpha_0,\beta_0) \in \{\bal,\ub\}^2$ such that $c(Q) = (\alpha_0,\beta_0)$ for all $Q \in \binom{V(H)}{4}$, and $|V(H)| = m_{\alpha_0,\beta_0}$. Let $U = V(H)$. 
	
	\begin{claim}\label{mtc1}
		If $\beta_0 = \bal$ then $\alpha_0 = \bal$. 
	\end{claim}
	\begin{proof}
		Suppose not; so $(\alpha_0,\beta_0) = (\ub,\bal)$ and $|U| = 5$; let $v_1,\dotsc,v_5$ be the vertices of $U$ in increasing order. The $\Theta$-graph containing circuits $[v_1,v_2,v_4,v_3],[v_1,v_2,v_5,v_3]$ and $[v_2,v_4,v_3,v_5]$ contains two (balanced) $1243$-circuits and one (unbalanced) $1324$-circuit, a contradiction.  
	\end{proof}
	
	
	\begin{claim}\label{mtc2}
		$\alpha_0 = \bal$. 
	\end{claim} 
	\begin{proof}
		If not, then $(\alpha_0,\beta_0) = (\ub,\ub)$, so $|U| = 3r!$. Let $X \subseteq U$ be maximal such that $X$ contains no balanced circuits. If $|X| \ge r$ then $G$ has a $K_r$-subgraph with no balanced circuits, a contradiction; thus $|X| \le r-1$, so $U-X$ has a partition into at most $r$ intervals, and $|U - X| > 3r!- r > 2r(r-1)!$. By maximality, for each $v \in U - X$ there is a balanced circuit $C(v)$ contained in $X \cup \{v\}$. For each such $v$, the graph $P(v) = C(v)-v$  is a path contained in $X$, and there are at most $2(r-1)!$ such paths; by a majority argument there exist $v_1,v_2 \in U-X$ for which $P(v_1) = P(v_2)$, and $v_1$ and $v_2$ lie in the same interval of $U-X$. Now $C(v_1)$ and $C(v_2)$ form a $\Theta$-subgraph with a $4$-circuit $C = [u,v_1,w,v_2]$ where $u,w$ are the ends of $P(v_1)$, so $C$ is balanced. Since $u$ and $w$ do not lie in the interval between $v_1$ and $v_2$, $C$ is either a $1324$-circuit or a $1243$-circuit. Thus either $\alpha_0 = \bal$ or $\beta_0 = \bal$, and the claim follows from $\ref{mtc1}$. 
	\end{proof}

	The above claim gives that all $1324$-circuits in $H$ are balanced. 

	\begin{claim}
		All oscillating circuits in $H$ are balanced.  
	\end{claim}
	\begin{proof}
		Suppose otherwise, and let $C$ be an unbalanced oscillating circuit with $t$ as small as possible, with canonical ordering $v_1, \dotsc, v_t$. Note that $v_1 < v_3 < \min(v_2,v_4)$ and $t$ is even. Clearly $t > 4$, since all oscillating circuits of length at most $4$ are $1324$-circuits. The circuit $C' = [v_1,v_4,v_5 \dotsc, v_t]$ is also oscillating, and is balanced by minimality. Now $C$ and $C'$ are in a $\Theta$-graph with the (balanced) $1324$-circuit $[v_1,v_2,v_3,v_4]$, contradicting the choice of $C$ as unbalanced. 
	\end{proof}

	\begin{claim}
		$\beta_0 = \bal$. 
	\end{claim}
	\begin{proof}
		Suppose otherwise, so $(\alpha_0,\beta_0) = (\bal,\ub)$ (i.e. all $1243$-circuits of $H$ are unbalanced) and $|U| = 3s!$. Call a circuit \emph{bad} if it is balanced but not oscillating.  Let $n_1 = 2(s-1)!$, and let $U_0,U_1, \dotsc, U_{s}$ be sets of vertices of $H$ such that $|U_0| = s$, while for each $0 < i \le s$ we have $|U_i| = n_1$, and the vertices of $U_{i-1}$ precede the vertices of $U_i$ in the ordering; these sets exist because $s + sn_1 < s + 2s! < |U|$.

		
		Let $u_1, \dotsc, u_s$ be the vertices of $U_0$ listed in increasing order. Let $\cC$ denote the set of circuits of $H$ whose vertex set is contained in $U_0$ (noting that $|\cC| < n_1$), and let $\cW = U_1 \times U_2 \times \dotsc \times U_s$. For each $1 \le i \le s$ and $W = (w_1, \dotsc, w_s) \in \cW$, let $\varphi_W(u_s) = w_s$. For each $C \in \cC$, let $\varphi_W(C)$ denote the circuit obtained from $C$ by applying the map $\varphi_W$ to each vertex. 
		
		Note that the map $C \mapsto \varphi_W(C)$ is a bijection between $\cC$ and the set of circuits with vertex set contained in $W$, and that this map preserves the property of being oscillating.
		
		
		Let $W = (w_1, \dotsc, w_s) \in \cW$ be such that the number of bad circuits with vertex set contained in $W$ is as small as possible. If this number is zero, then $W$ is the vertex set of a $K_s$-subgraph in which precisely the oscillating circuits are balanced, a contradiction. Thus, there is some $C_0 \in \cC$ for which $\varphi_W(C_0)$ is bad, so, since $C_0$ is not oscillating, there are consecutive vertices $x,y,z$ in $C_0$ with $x <y<z$. Let $A$ be the set $U_i$ containing $\varphi_W(y)$, and for each $a \in A$, let $W_a$ be obtained from $W$ by replacing $\varphi_W(y)$ with $a$. Note that if $C \in \cC$, then either
		\begin{enumerate}
			\item\label{mte1} $y \notin V(C)$ and $\varphi_{W_a}(C) = \varphi_{W_{a'}}(C)$ for all $a,a' \in A$, 
			\item\label{mte2} $y$ is either the minimum or maximum of its two neighbours in $C$ and, for all $a,a' \in A$, the circuits $\varphi_{W_a}(C)$ and $\varphi_{W_{a'}}(C)$ form a $\Theta$-graph with a $1324$-circuit (which is balanced by (\ref{mtc2})), or 
			\item\label{mte3} $y$ lies between its two neighbours in $C$ and, for all $a,a' \in A$, the circuits $\varphi_{W_a}(C)$ and $\varphi_{W_a'}(C)$ form a $\Theta$-graph with an $1243$-circuit (which is unbalanced by assumption). 
		\end{enumerate} 
		
		For each $C \in \cC$, consider the set $A_C$ of elements $a \in A$ for which $\varphi_{W_a}(C)$ is balanced and $\varphi_W(C)$ is unbalanced. Note that if (\ref{mte1}) holds then $A_C = \varnothing$. If (\ref{mte2}) holds, then for each $a \in A_C$, the $\Theta$-graph containing some $1324$-circuit and the circuits $\varphi_W(C)$ and $\varphi_{W_a}(C)$ contains exactly two balanced circuits, so $A_C = \varnothing$. If (\ref{mte3}) holds and $a,a' \in A_C$ are distinct, then the $\Theta$-graph containing $\varphi_{W_a}(C)$, $\varphi_{W_{a'}}(C)$, and some $1243$-circuit contains exactly two balanced circuits, so $|A_C| \le 1$. 
		
		Since $|\cC| < n_1 = |A|$, there is some $b \in A$ that is outside every $A_C$. Note that $\varphi_W(C_0)$ is balanced and that, since $y$ lies (in the ordering) between its two neighbours in $C_0$, the circuit $C_0$ satisfies (\ref{mte3}) and, since $1243$-circuits are unbalanced, so is the circuit $\varphi_{W_b}(C_0)$. So $\varphi_{W_b}(C_0)$ is not bad. Moreover, by choice of $b$, there is no $C \in \cC$ for which $\varphi_W(C)$ is not bad and $\varphi_{W_b}(C)$ is bad. Therefore $W_b$ contains strictly fewer bad circuits than $W$, contradicting the choice of $W$.   
	\end{proof}
	By these claims, we have $(\alpha_0,\beta_0) = (\bal,\bal)$ and $|U| = t$; thus, $H$ satisfies the third outcome, a contradiction.
\end{proof}

\section{Balanced $4$-circuits}

Let $(G,\le)$ be an ordered graph where $G$ is complete with vertex set $V$. Define $\Omega(G)$ to be the graph whose vertices are the \emph{nonspanning} circuits of $G$, in which two circuits $C_1,C_2$ are adjacent if and only if there is a $\Theta$-subgraph of $G$ containing $C_1,C_2$ and some $1243$-circuit or $1324$-circuit. Our first lemma describes the components of $\Omega$.

\begin{lemma}\label{omegacomponents}
	Let $n \ge 5$ and let $(G,\le)$ be an ordered graph where $G \cong K_n$. Then two vertices $F_1,F_2$ of $\Omega(G)$ are in the same component of $\Omega$ if and only if $\delta(F_1) = \delta(F_2)$. 
\end{lemma}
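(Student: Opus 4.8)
The plan is to prove the two implications separately; essentially all the content is in one direction.

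\smallskip
\noindent\emph{Same component $\Rightarrow$ equal $\delta$.} It suffices to check this for adjacent $F_1,F_2$. By definition there is a $\Theta$-subgraph with branch vertices $x,y$, internally disjoint $x$--$y$ paths $P_1,P_2,P_3$ (all oriented $x\to y$), whose three circuits are $F_1,F_2$ and a $1243$- or $1324$-circuit $F_3$; say $F_3=P_1\cup P_2$, $F_1=P_1\cup P_3$, $F_2=P_2\cup P_3$. For a path $Q$ traversed from $x$ to $y$ let $\delta_0(Q)$ be the number of ascending edges minus the number of descending edges of $Q$; reversing the traversal negates this. Traversing $P_i\cup P_j$ as $P_i$ followed by the reverse of $P_j$ gives $\delta(P_i\cup P_j)=|\delta_0(P_i)-\delta_0(P_j)|$. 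A direct check (of the kind in Lemma~\ref{basicclasses}) shows every $1243$- and every $1324$-circuit has $\delta=0$, so $\delta_0(P_1)=\delta_0(P_2)$, and hence $\delta(F_1)=|\delta_0(P_3)-\delta_0(P_1)|=|\delta_0(P_3)-\delta_0(P_2)|=\delta(F_2)$.

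\smallskip
\noindent\emph{Equal $\delta$ $\Rightarrow$ same component.} Fix a value $c$ for which a nonspanning circuit of $\delta$-value $c$ exists (so $c\le n-3$), and let $k_0$ be the least length of such a circuit: $k_0=c+2$ if $c\ge1$, and $k_0=4$ if $c=0$; every such length is $\ge k_0$ and is $\equiv c\pmod 2$. I would show every nonspanning circuit $F$ with $\delta(F)=c$ is $\Omega$-connected to one fixed circuit $S_c$ (the monotone circuit on $\{1,\dots,k_0\}$ if $c\ge1$, the circuit $[1,2,4,3]$ if $c=0$). The tool is the reformulation that $F\sim_\Omega F'$ exactly when $F,F'$ share a common subpath $P$, with $F=P\cup A$ and $F'=P\cup A'$ ($A,A'$ internally disjoint from each other and from $P$), $|E(A)|+|E(A')|=4$, and $\delta_0(A)=\delta_0(A')$; indeed the third circuit of the $\Theta$ is $A\cup A'$, which is a $1243$- or $1324$-circuit precisely under these conditions. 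This yields two kinds of move: (M1) replace a vertex $a$ of $F$, with neighbours $x,y$ in $F$, by a vertex $a'\notin V(F)$ such that the $4$-circuit $[x,a,y,a']$ has $\delta=0$ (a ``non-crossing diagonals'' condition) --- this keeps $|V(F)|$ fixed; and (M2) replace an edge $xy$ of $F$ by a path $x$--$p$--$q$--$y$ on two fresh vertices with $\delta_0(x,p,q,y)=\operatorname{sgn}(y-x)$, or its reverse --- this changes $|V(F)|$ by $\pm2$, the expansion requiring $|V(F)|\le n-2$. I would then induct on $|V(F)|$. If $|V(F)|=k_0$: for $c\ge1$ the circuit $F$ is monotone, and a sequence of M1-swaps moving its vertices one at a time towards $\{1,\dots,k_0\}$ reaches $S_c$ (one checks the non-crossing condition always permits the next swap); the case $c=0$, $k_0=4$ (here $F$ is a $1243$- or $1324$-circuit) I would dispatch directly, routing through a $6$-vertex $\delta=0$ circuit when $n\ge 6$ and by a finite check when $n=5$. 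If $|V(F)|>k_0$, then $\delta(F)=c<|V(F)|-2$, so $F$ is not monotone; after possibly some M1-swaps I claim $F$ has a \emph{bad window} --- four consecutive vertices forming a $1243$- or $1324$-circuit --- and then an M2-contraction of that window gives a nonspanning circuit $F'$ with $\delta(F')=c$ and $|V(F')|=|V(F)|-2$, to which the induction applies. Everything produced stays nonspanning, since lengths only decrease except for intermediate M2-expansions, performed only when $|V(F)|\le n-2$.

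\smallskip
The main obstacle is the last claim. It is genuinely false that a non-monotone circuit must already have a bad window: for instance $[7,5,3,1,6,4,2]$ has $\delta=3$ and all seven of its $4$-windows are monotone. So one must show such a circuit can be \emph{moved}, by M1-swaps (using whichever vertex lies outside $V(F)$), to one with a bad window. I would attack this by analysing the shape of an all-windows-monotone circuit near its maximum vertex $v_j$: monotonicity of the three windows through $v_j$ forces a local minimum adjacent to $v_j$ on one side and a descending run on the other, and iterating pins the circuit down tightly enough to exhibit a fresh vertex and an extremal position whose M1-swap destroys this structure (in the example, swapping the local minimum $2$ up to a fresh vertex creates a bad window). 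If workable, a cleaner route would be a monovariant --- say a weighted count of $4$-windows by ``shape'' --- that some admissible M1-swap strictly decreases until a bad window appears; I would try that first. The remaining points (the $c=0$ base case and the bookkeeping in the monotone base case) are routine.
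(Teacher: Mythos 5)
Your forward implication (adjacent in $\Omega$ implies equal $\delta$) is correct and is essentially the paper's argument verbatim, and your reformulation of $\Omega$-adjacency via the moves M1 and M2 is sound. The problem is the backward direction, and it is precisely the gap you flag yourself: your induction on $|V(F)|$ needs every non-monotone circuit of length greater than $k_0$ to be movable, by M1-swaps, to one containing a contractible ``bad window,'' and your own example $[7,5,3,1,6,4,2]$ shows the window need not be there to begin with. The two repairs you sketch (a structural analysis near the maximum vertex, or an unspecified weighted count of window shapes) are not carried out, and this step is the technical heart of the lemma, so as written the proof is incomplete.

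The paper closes exactly this gap with a different monovariant: not a count of windows, but the number of \emph{inversions} (``decreasing pairs'' $(i,j)$ with $i<j$ and $v_i>v_j$) in the canonical ordering $v_1,\dotsc,v_k$. Among all circuits connected to $F$, it takes one minimizing first the length and then the inversion count, and considers the inversion $(a,b)$ with $a$ minimal and, subject to that, $b$ maximal. If $b=a+1$, the four consecutive positions around $a$ already form a $1243$- or $1324$-window and the length drops by $2$ (your M2-contraction), contradicting minimality of length. If $b>a+1$, no window is needed: a single M1-type swap --- relabel so as to free the value $v_b$ (shifting every larger label up by one, which preserves similarity and hence connectivity by the paper's first claim), then move the vertex in position $a$ down to the freed value --- passes through a $1243$-circuit and strictly decreases the inversion count. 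On the canonical form $[1,3,5,7,2,4,6]$ of your example, this swaps the $3$ in position $2$ to a value just below $2$, dropping the inversion count from $6$ to $5$; iterating eventually either produces a positionally adjacent inversion (hence a genuine bad window) or a monotone circuit. Note that the swap consumes one spare vertex, which is exactly where the restriction to nonspanning circuits enters. So your skeleton is salvageable, but you must replace ``find or create a bad window'' by this two-tier (length, inversions) descent, or prove an equivalent statement; without it the main claim of the backward direction is unproven.
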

\begin{proof}
	We may assume that $G = K_n$ and $\le$ is the usual ordering. Let $\Omega = \Omega(G)$. Say that two vertices of $\Omega$ are \emph{connected} if they are in the same component of $\Omega$. 
	
	\begin{claim}\label{similar}
		Each pair of similar circuits of $G$ is connected in $\Omega$.  
	\end{claim}
	\begin{proof}[Subproof:]
		It suffices to show for each $3 \le k \le n$ that each $k$-circuit of $G$ is connected in $\Omega$ to the unique circuit with vertex set $[k]$ that is similar to $C$. 
		Let $C_0$ be a $k$-circuit such that $C,C_0$ are connected in $\Omega$, while the sum $\Sigma(C_0)$ of the vertices in $C_0$ is as small as possible. If $V(C_0) = [k]$ then the required statement holds. Otherwise there is some $\ell \in V(C_0)$ for which $\ell-1 \notin V(C_0)$. Let $C_0'$ be obtained by replacing the vertex $\ell$ with the vertex $\ell-1$ in $C'$. Now $C_0'$ and $C_0$ form a $\Theta$-graph with a $4$-circuit $C'' = [x,\ell-1,y,\ell]$; since no vertex lies between $\ell-1$ and $\ell$ in the ordering, we see that $C_4$ is a $1243$- or $1324$-circuit. Thus $X_0'$ and $C_0'$ are adjacent in $\Omega$; since $\Sigma C_0' = \Sigma C_0 - 1$, this contradicts the minimality in the choice of $C_0$. 
	\end{proof}
	
	Call a circuit $C$ of $G$ \emph{basic} if either $C = [1,2,4,3]$, or if $C = [1,\dotsc,k]$ for some $k \ge 3$. Note that the first type has $\delta = 0$, while other basic $k$-circuits have $\delta = k-2$. 
	
	\begin{claim}\label{basiccircuit}
		Each nonspanning circuit of $G$ is connected in $\Omega$ to a basic circuit.
	\end{claim}
	\begin{proof}[Subproof:]
		For each circuit $C$ of $G$ with canonical ordering $v_1,\dotsc,v_k$, call a pair $(i,j)$ \emph{decreasing} in $C$ if $1 \le i < j \le k$ and $v_i > v_j$; since the ordering is canonical, any decreasing pair $(i,j)$ satisfies $i > 1$.  Similar circuits have the same set of decreasing pairs.  
		
		Fix a circuit $C$ of $G$. Let $C'$ be a circuit connected to $C$ in $\Omega$ for which $|V(C')|$ is as small as possible, and, subject to this, $C'$ has as few decreasing pairs as possible. Let $C_0$ be the unique circuit with vertex set $[k]$ that is similar to $C'$; we show that $C_0$ is basic. Let $v_1, v_2, \dotsc, v_k$ be the canonical ordering of $C_0$, so $v_1 = 1$ and $v_2 < v_k$. 
		
		Note that $C$ is connected to $C_0$ in $\Omega$, and that the decreasing pairs for $C_0$ are exactly those for $C'$. If $C_0$ has no decreasing pair then $C_0 = [1,\dotsc,k]$ is basic, giving the result. If $k \le 4$ then $C_0$ is either one of the basic circuits $[1,2,3],[1,2,3,4],[1,2,4,3]$, or $C_0 = [1,3,2,4]$. In the first three cases the result holds, and in the last, the $\Theta$-graph containing circuits $C_0,[3,5,2,4]$ and $[1,3,5,4]$ certifies that $C_0$ and $[1,3,5,4]$ are adjacent in $\Omega$, since $[1,3,5,4]$ is similar to $[1,2,4,3]$, the result holds by (\ref{similar}).
		
		Suppose, therefore, that $k > 4$ and that $C_0$ has a decreasing pair. Let $(a,b)$ be a decreasing pair for $C_0$ for which $a$ is as small as possible, and subject to this, $b$ is as large as possible. Note that $a > 1$. Suppose first that $b = a+1$; since $(a-1,a+1)$ is not decreasing we have $v_{a-1} < v_{a+1} < v_a$. 
		
		If $a = 2$ then, since $(2,4)$ is not decreasing we have $v_1 < v_{3} < v_2 < v_4$. Let $C_1 = [v_1,v_4,v_5\dotsc,v_k]$; note that $|V(C_1)| = k-2 \ge 3$. Now $C_0$ and $C_1$ form a $\Theta$-graph with the $1324$-circuit $C_2 = [v_1,v_2,v_3,v_4]$. Therefore $C_1$ is adjacent to $C_0$ in $\Omega$ and is thus connected to $C$; since $|V(C_1)| < |V(C')|$, this contradicts the minimality in the choice of $C'$.  
		
		If $a > 2$ then since $(a-2,a-1)$ is not decreasing we have have $v_{a-2} < v_{a-1} < v_{a+1} < v_a$. Let $C_1 = [v_1,\dotsc,v_{a-2},v_{a+1}, \dotsc, v_k]$ (noting that $|V(C_1)| = k-2 \ge 3$). The circuits $C_0$ and $C_1$ form a $\Theta$-graph with the $1243$-circuit $C_2 = [v_{a-2},v_{a-1},v_a,v_{a+1}]$; we obtain a contradiction as in the previous case.  
		
		We may thus assume that $b > a+1$. We will now derive a contradiction by finding a $k$-circuit $C_0''$, connected to $C$ in $\Omega$, with fewer decreasing pairs than $C$. 
		
		Let $C_0' = [v_1',v_2',\dotsc,v_k']$ where $v_i' = v_i$ if $v_i < v_b$, and $v_i' = v_b+1$ if $v_i \ge v_b$. So $C_0'$ has vertex set $[k+1]-\{v_b\}$ and is similar to $C_0$ (and the ordering $v_1', \dotsc, v_k'$ is canonical). Let $C_0'' = [v_1'',\dotsc, v_k'']$, where $v_i'' = v_i'$ if $i \ne a$, and $v_a'' = v_b$. Again, since $v_2'' \le v_2' < v_k' = v_k''$, the ordering $v_1'', \dotsc, v_k''$ is canonical.   The circuits $C_0''$ and $C_0'$ form a $\Theta$-graph with the $4$-circuit $C_1 = [v_{a-1}',v_b,v_{a+1}',v_{a}']$. Since $v_b' = v_b+1 \not\in V(C_1)$, the circuit $C_1$ is similar to $C_1' = [v_{a-1}',v_b',v_{a+1}',v_a']$; the fact that $v_b' < v_a'$ and neither $(a-1,b)$ nor $(a,a+1)$ is decreasing in $C_0'$ gives $v_{a-1}' < v_b' < v_{a}' < v_{a+1}'$ and so $C_1'$ and $C_1$ are $1243$-circuits. This implies that $C_0'$ and $C_0''$ are adjacent in $\Omega$. 
		
		Since $v_a'' = v_b < v_b+1 = v_b''$, the pair $(a,b)$ is not decreasing in $C_0''$ but is decreasing in $C_0$ and therefore in $C'$. We now argue that every decreasing pair $(i,j)$ in $C_0''$ is decreasing in $C_0'$; this will imply that $C_0''$ has strictly fewer decreasing pairs than $C_0$ and therefore fewer such pairs than $C'$, contradicting the minimality in the choice of $C'$. 
		
		Let $(i,j)$ be a decreasing pair in $C_0''$. If $j = a$ then $i < a$ and, using the fact that $(a-1,b)$ is not decreasing in $C_0'$, we have $v_i'' = v_i' \le v_{a-1}' < v_b' = v_b+1 = v_a''+1$ and so $v_i'' \le v_a'' = v_j''$, contradicting the definition of $(i,j)$. If $j \ne a$ then, using the fact that $v_{\ell}' \ge v_{\ell}''$ for all $\ell$, we have $v_i' \ge v_i'' > v_j'' = v_j'$ so $(i,j)$ is decreasing in $C_0'$. This fact establishes the aforementioned contradiction. 
	\end{proof}

	\begin{claim}\label{preservedelta}
		If $C_1,C_2$ are connected in $\Omega$, then $\delta(C_1) = \delta(C_2)$. 
	\end{claim}
	\begin{proof}[Subproof:]
		It suffices to show this for $C_1,C_2$ adjacent in $\Omega$. Let $H$ be a $\Theta$-graph of $\Omega$ containing $C_1,C_2$ and a $1324$-circuit or $1243$-circuit $C_3$; note that $\delta(C_3) = 0$. Let $x,y$ be the vertices of $H$ for which $E(H)$ is the disjoint union of three $xy$-paths $P_1,P_2,P_3$, where each $C_i$ has edge set disjoint from that of $P_i$. For each $xy$-path $P$ with vertices $x = x_0,x_1,\dotsc,x_{\ell} = y$ listed in path order, let 
		\[\delta(P) = |\{i \in [\ell]\colon x_i > x_{i-1}\}| - |\{i \in [\ell]\colon x_i < x_{i-1}\}|.\]
		By construction, we have  \[(\delta(C_1),\delta(C_2),\delta(C_3)) = (|\delta(P_3)-\delta(P_2)|,|\delta(P_1)-\delta(P_3)|,|\delta(P_2)-\delta(P_1)|).\] Since $\delta(C_3) = 0$, this gives $\delta(P_1) = \delta(P_2)$ and so $\delta(C_1) = \delta(C_2)$. 
	\end{proof}
	By (\ref{preservedelta}), vertices of $\Omega$ in the same component have the same value of $\delta$. By (\ref{basiccircuit}), any two vertices $C_1,C_2$ of $\Omega$ with $\delta(C_1) = \delta(C_2) = d$ are both connected to the unique basic circuit $C_0$ with $\delta(C_0) = d$, so are connected in $\Omega$; the lemma follows. 
\end{proof}



\begin{lemma}\label{constantlabelling}
	Let $n \ge 5$ and let $(G,\cB,\le)$ be an ordered biased graph with $G \cong K_n$. If all $1324$-circuits and $1243$-circuits of $G$ are balanced, then there exists $a > 0$ such that $(G,\cB)-v \cong \ka{a}{n-1}$ for each $v \in V$. 
\end{lemma}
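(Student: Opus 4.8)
The plan is to show that, for a nonspanning circuit, being balanced depends only on the value of $\delta$; that the resulting set of ``balanced $\delta$-values'' is closed under the arithmetic forced by the $\Theta$-property; and hence that it consists of the multiples of a single integer $a>0$, which is exactly the structure of $\ka{a}{n-1}$ isolated in Lemma~\ref{basicclasses}(\ref{onecase}).

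\emph{Step 1: passing to $\delta$.} The hypothesis says precisely that every $4$-circuit $C$ with $\delta(C)=0$ is balanced, since the $1324$- and $1243$-circuits are exactly the two similarity classes of such circuits. Each edge of $\Omega(G)$ is witnessed by a $\Theta$-subgraph one of whose circuits is a $1324$- or $1243$-circuit, hence balanced; by the $\Theta$-property no $\Theta$-subgraph has exactly two balanced circuits, so the other two circuits of that $\Theta$ agree on being balanced. Thus balancedness is constant on components of $\Omega(G)$. (For an $\Omega$-edge between circuits of $\delta$-value $d\ge1$ the witnessing $1324$/$1243$-circuit, having $\delta=0$, is automatically distinct from both endpoints; for the component of $\delta$-value $0$ one argues by induction on length, each step using a $\Theta$ formed by a $\delta=0$ circuit, a shorter $\delta=0$ circuit, and a $1324$- or $1243$-circuit.) By Lemma~\ref{omegacomponents} the components of $\Omega(G)$ are the classes of nonspanning circuits with a common value of $\delta$; since the nonspanning circuit $[1,2,4,3]$ is a balanced $1243$-circuit ($n\ge5$), there is a set $D$ with $0\in D\subseteq\{0,1,\dots,n-3\}$ such that a nonspanning circuit $C$ of $G$ is balanced if and only if $\delta(C)\in D$.

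\emph{Step 2: arithmetic of $D$ (the crux).} I would then prove: if $d_1,d_2\ge0$ with $d_1+d_2\le n-3$, and at least two of $d_1$, $d_2$, $d_1+d_2$ lie in $D$, then all three do. To do this I would exhibit a $\Theta$-subgraph of $K_n$ whose three circuits are \emph{nonspanning} and carry the $\delta$-values $d_1$, $d_2$, $d_1+d_2$: then two of its circuits are balanced, so by the $\Theta$-property the third is too, and being nonspanning its $\delta$-value lies in $D$. The natural construction takes the basic circuit $[1,2,\dots,d_1+d_2+2]$ (of $\delta$-value $d_1+d_2$, nonspanning since $d_1+d_2\le n-3$), cuts its edge set at the vertices $1$ and $\min(d_1,d_2)+1$ into two arcs, and adjoins a third $1$-to-$(\min(d_1,d_2)+1)$ path $[\,1,\,x,\,\min(d_1,d_2)+1\,]$ through a single fresh vertex $x$ (available precisely because $d_1+d_2\le n-3$); this third path has $\delta$-value $0$, and the two new circuits then have $\delta$-values $d_1$ and $d_2$. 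In one extremal configuration (essentially $\{d_1,d_2\}=\{1,n-4\}$) one of the two new circuits would be spanning, and there I would substitute an ad hoc $\Theta$-subgraph built on the basic circuit $[1,\dots,n-1]$. The upshot is that $D$ is closed under $(d_1,d_2)\mapsto d_1+d_2$ whenever $d_1+d_2\le n-3$, and under $(d_1,d_2)\mapsto|d_1-d_2|$.

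\emph{Step 3: conclusion.} If $D\cap\{1,\dots,n-3\}=\varnothing$ put $a=n-2$; otherwise let $a$ be its least element. The closure properties together with $0\in D$ give, by the Euclidean algorithm, that $D=\{ma:m\ge0,\ ma\le n-3\}$: any $d\in D\cap\{1,\dots,n-3\}$ can be reduced by repeated subtraction of $a$ to an element of $D$ smaller than $a$, which by minimality must be $0$, so $a\mid d$; conversely each $ma\le n-3$ is produced from $a$ by repeated additions. Finally fix $v\in V(G)$. Every circuit of $G-v$ is a circuit of $G$ avoiding $v$, hence nonspanning, and has $\delta$-value in $\{0,\dots,n-3\}$; under the unique order-isomorphism $(G-v,\le)\to([n-1],\le)$, which preserves $\delta$, the balanced circuits of $(G,\cB)-v$ (namely those with $\delta\in D$, i.e.\ with $a\mid\delta$) are carried precisely to the balanced circuits of $\ka{a}{n-1}$ by Lemma~\ref{basicclasses}(\ref{onecase}). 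Hence $(G,\cB)-v\cong\ka{a}{n-1}$, with the same $a>0$ for every $v$. The main obstacle is Step 2: the required $\Theta$-subgraphs must sit inside $K_n$ with \emph{all three} circuits nonspanning, and the obvious ``cut a basic circuit'' recipe degenerates in a couple of boundary cases that have to be handled by hand; Step 1 likewise needs a little extra care at $\delta=0$.
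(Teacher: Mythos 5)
Your proposal is correct and follows essentially the same route as the paper: reduce balancedness of nonspanning circuits to their $\delta$-classes via $\Omega(G)$ and Lemma~\ref{omegacomponents}, show that the set of balanced $\delta$-values is closed under the arithmetic forced by the $\Theta$-property using $\Theta$-subgraphs built on the basic circuits $[1,\dotsc,k]$, and conclude with Lemma~\ref{basicclasses}. The only real difference is in your Step 2, where the paper simply adds the chord $\{1,d_1+2\}$ to $[1,2,\dotsc,d_2+2]$ — which keeps all three circuits nonspanning automatically and avoids your boundary cases — though your explicit verification of sum-closure (not just difference-closure) is a worthwhile addition, as it is what actually guarantees that every multiple of $a$ up to $n-3$ is a balanced value.
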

\begin{proof}
	We may assume that $(G,\le)$ is $K_n$ with the usual ordering. Let $\Omega = \Omega(G)$. Note that a nonspanning circuit $C$ of $G$ must have $\delta(C) \le |V(C)|-2 \le n-3$. For each $0 \le d \le n-3$, let $\cU_d$ be the set of nonspanning circuits $C$ of $G$ with $\delta(C) = d$. Note that each $\cU_d$ is nonempty; by Lemma~\ref{omegacomponents}, the $\cU_d$ are the components of $\Omega$. 
	
	By the definition of $\Omega$, hypothesis, and the $\Theta$-property, adjacent vertices of $\Omega$ are either both balanced or both unbalanced; it follows that for each $d \ge 0$, we either have $\cU_d \subseteq \cB$ or $\cU_d \cap \cB = \varnothing$. Let $\cD$ be the set of all $0 \le d \le n-3$ for which $\cU_d \subseteq \cB$.  Since $\cU_0 \cap \cB$ contains the circuit $[1,2,4,3]$, we have $0 \in \cD$. 

	Let $d_1,d_2 \in \cD$ with $0 < d_1 < d_2$; we show that $d_2-d_1 \in \cD$. Let $H$ be the $\Theta$-graph containing $C_2 = [1,2,\dotsc, d_2+2]$ together with a single edge from $1$ to $d_1 + 2$. We have $\delta(C_2) = d_2$, and the other two circuits $C_0,C_1$ of $H$ satisfy $\delta(C_0) = d_2-d_1$ and $\delta(C_1) = d_1$. Since $d_1,d_2 \in \cD$ we have $\{C_1,C_2\} \subseteq \cB$ and so $C_0 \subseteq \cB \cap \cU_{d_2-d_1}$ by the $\Theta$-property. This implies that $\cU_{d_2-d_1} \subseteq \cB$ and so $d_2-d_1 \subseteq \cD$. Thus $\cD$ contains zero and is closed under absolute differences. It follows that there exists $a > 0$ for which $\cD = \{d \in \{0,\dotsc,n-3\} \colon a|d\}$, and thus a nonspanning circuit $C$ of $G$ is balanced if and only if $a|\delta(C)$. Thus $(G,\cB)-v \cong K^a(n-1)$ for each vertex $v$ of $G$.
\end{proof}

Combining this Lemma with Theorem~\ref{slog} and Lemma~\ref{basicclasses}, we get the following restatement of our main theorem with explicit bounds in terms of Ramsey numbers and our stronger, `ordered' isomorphism. 

\begin{theorem}\label{main}
	Let $r,s \ge 1$, $t \ge 4$ and $n \ge R_4(3r!,3s!,5,t+1)$. If $(G,\cB,\le)$ is an ordered biased graph with $G \cong K_n$, then $(G,\cB,\le)$ has an ordered biased subgraph isomorphic to  $\kub{r}$, $\kosc{s}$, or $\ka{a}{t}$ for some $a > 0$. 
\end{theorem}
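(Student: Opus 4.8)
The plan is to simply feed the conclusion of Theorem~\ref{slog} into Lemma~\ref{constantlabelling}, using Lemma~\ref{basicclasses} to recognise the resulting biased graphs. Since $n \ge R_4(3r!,3s!,5,t+1)$ and $t \ge 4 \ge 1$, I would first apply Theorem~\ref{slog} with parameters $(r,s,t+1)$ to $(G,\cB,\le)$. This yields one of three ordered biased subgraphs of $(G,\cB,\le)$: a copy of $K_r$ with no balanced circuit, a copy of $K_s$ whose balanced circuits are exactly the oscillating ones, or a copy of $K_{t+1}$ in which every $1243$-circuit and $1324$-circuit is balanced.

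In the first case, Lemma~\ref{basicclasses}(\ref{ubcase}) says $\kub{r}$ also has no balanced circuit; in the second, Lemma~\ref{basicclasses}(\ref{osccase}) says the balanced circuits of $\kosc{s}$ are precisely the oscillating ones. In both cases I would argue that the unique order-preserving bijection from the subgraph to $\kub{r}$, respectively $\kosc{s}$, is an isomorphism of ordered biased graphs — the point being that whether a circuit is oscillating (like whether it is balanced in $\kub{r}$) depends only on the relative order of its vertices, hence is preserved by any order-preserving bijection between cliques. This gives the desired subgraph in the first two outcomes.

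In the third case, I would apply Lemma~\ref{constantlabelling} to the ordered biased graph induced on the $K_{t+1}$-subgraph $H$; this is legitimate precisely because $t \ge 4$ forces $t+1 \ge 5$. It produces an integer $a > 0$ with $(H,\cB')-v \cong \ka{a}{t}$ for each vertex $v$ of $H$, which is exactly a subgraph of the required form. Concretely, the proof of Lemma~\ref{constantlabelling} shows a nonspanning circuit $C$ of $H$ is balanced if and only if $a \mid \delta(C)$; combined with Lemma~\ref{basicclasses}(\ref{onecase}) and the fact that $\delta$ depends only on the relative order of the vertices of a circuit, the order-preserving bijection from $H-v$ to $K^a(t)$ is an isomorphism of ordered biased graphs.

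There is essentially no obstacle here: the theorem is a packaging of the three preceding results, and the only care required is bookkeeping — tracking the parameter shift (invoke Theorem~\ref{slog} with $t+1$, then delete one vertex via Lemma~\ref{constantlabelling}) and checking that the various ``ordered'' isomorphisms hold because oscillating circuits, $1243$- and $1324$-circuits, and the value of $\delta$ are all invariants of the order type of a vertex set.
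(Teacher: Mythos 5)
Your proposal is correct and matches the paper's argument exactly: the paper obtains Theorem~\ref{main} by "combining" Lemma~\ref{slog} (applied with parameters $r,s,t+1$), Lemma~\ref{basicclasses}, and Lemma~\ref{constantlabelling}, with the same parameter shift from $t+1$ to $t$ via the vertex deletion. Your bookkeeping about order-preserving isomorphisms and the role of $t\ge 4$ in guaranteeing $t+1\ge 5$ is precisely the care the paper's implicit proof requires.
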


Finally, we give a construction implying that the vertex deletion in Lemma~\ref{constantlabelling} is necessary, in fact showing that there are doubly exponentially many counterexamples if the vertex deletion is not insisted upon. Since Lemma~\ref{constantlabelling} controls the behaviour of every nonspanning cycle, all the wildness occurs within the Hamilton cycles. 


\begin{theorem}\label{counterexamples}
	For all $n \ge 10$, there are at least $2^{2^{n-9}}$ ordered biased graphs $(G,\cB,\le)$ where $G = K_n$ and every $1243$-circuit and $1324$-circuit of $G$ is balanced, but $(G,\cB)$ does not arise from any group-labelling. 
\end{theorem}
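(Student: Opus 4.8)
The plan is to start from Lemma~\ref{constantlabelling}: any biased graph $(G,\cB,\le)$ with $G=K_n$ in which every $1243$- and $1324$-circuit is balanced satisfies $(G,\cB)-v\cong \ka{a}{n-1}$ for some fixed $a=a(G,\cB)>0$ and every $v$, so by Lemma~\ref{basicclasses} a nonspanning circuit $C$ is balanced if and only if $a\mid\delta(C)$. Thus such a biased graph is determined by the pair $(a,\mathcal H)$, where $\mathcal H$ is its set of balanced Hamilton cycles, and conversely, with $a$ fixed, the pair $(a,\mathcal H)$ gives a biased graph exactly when the $\Theta$-property holds. A $\Theta$-subgraph of $K_n$ missing a vertex lies inside some $(G,\cB)-v\cong \ka{a}{n-1}$, so it never violates the $\Theta$-property; a spanning $\Theta$-subgraph with no single-edge path has all three of its circuits nonspanning, and the $\delta$-values of those circuits are the three pairwise differences $|\delta(P_i)-\delta(P_j)|$ of the path $\delta$'s, which can never give exactly two multiples of $a$ since congruence mod $a$ is an equivalence relation; so this too never violates it. The remaining $\Theta$-subgraphs are precisely $C+e$ for a Hamilton cycle $C$ and a chord $e$ of $C$, whose other two circuits are the nonspanning circuits obtained by closing the two $e$-arcs of $C$ with $e$ and hence have already-determined balance. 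Reading off the $\Theta$-condition for $C+e$ over all chords $e$ puts each Hamilton cycle $C$ into exactly one of three classes: \emph{forced balanced}, \emph{forced unbalanced}, or \emph{free}; no $C$ lies in the first two at once, since the group-labelled biased graph $\ka{b}{n}$ (for the value of $b$ matching the nonspanning structure of $(a,\mathcal H)$) furnishes a valid choice of $\mathcal H$. Consequently the valid $\mathcal H$ are exactly the forced-balanced cycles together with an arbitrary subset of the free cycles, giving exactly $2^{F(a)}$ biased graphs, where $F(a)$ is the number of free Hamilton cycles.

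The heart of the argument, and the step I expect to be the main obstacle, is to choose $a$ as a function of $n$ and exhibit a family $\cF$ of more than $2^{n-9}$ free Hamilton cycles of $K_n$. Since ``$C$ is free'' means precisely that for every chord $e$ of $C$ and each of the two $e$-arcs $Q$, the circuit $Q\cup e$ is unbalanced, this is a delicate condition on $C$ relative to $a$ expressed through the $\delta$-values of the arcs of $C$. I would construct $\cF$ by fixing a suitable ``template'' on a bounded set of about nine vertices and then attaching the remaining $n-9$ vertices in one of two combinatorially inequivalent ways for each vertex, so that $\cF$ is indexed injectively by the subsets of an $(n-9)$-element set; freeness of each member would then be verified by directly checking, for a generic chord $e$, that neither arc-circuit of $C+e$ has $\delta$ divisible by $a$. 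Carrying out this verification cleanly — and choosing the template and $a$ so that it goes through — is where most of the work lies.

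I would then prove a rigidity statement: a biased graph on $K_n$ in which every $1243$- and $1324$-circuit is balanced arises from a group-labelling if and only if it is $\ka{b}{n}$ for some $b>0$. Indeed, if it arises from $(\oa{K_n},\gamma\colon E\to\Gamma)$, then the restriction to $V\setminus\{n\}$ is a group-labelling whose biased graph is $\ka{a}{n-1}$; by the standard rigidity of group-labellings on $2$-connected graphs (going back to Zaslavsky's work), a group-labelling is determined up to switching by its biased graph, and $\ka{a}{n-1}$ is realised by the constant labelling $g$ into a cyclic group, so after switching we may assume $\gamma(ij)=g$ for all $i<j<n$. The balanced $1243$- and $1324$-circuits through the vertex $n$ then force $\gamma(in)=g$ for every $i<n$, so $\gamma$ is the constant labelling and $(G,\cB)=\ka{\ord(g)}{n}$. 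For a fixed $a$ there are at most two values of $b$ for which $\ka{b}{n}$ has the nonspanning structure of $(a,\mathcal H)$, so at most two of our $2^{F(a)}\ge 2^{2^{n-9}+1}$ biased graphs arise from group-labellings.

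Finally, because $K_n$ carries a fixed ordering, any isomorphism of ordered biased graphs between two of our examples is the identity map, so distinct choices of $\mathcal H$ yield non-isomorphic ordered biased graphs. Discarding the at most two group-labelled ones leaves at least $2^{2^{n-9}+1}-2\ge 2^{2^{n-9}}$ ordered biased graphs $(G,\cB,\le)$ with $G=K_n$ in which every $1243$- and $1324$-circuit is balanced but which arise from no group-labelling, which is the claim.
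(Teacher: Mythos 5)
Your overall architecture is reasonable, and the second half of your argument --- showing that a nontrivial choice of balanced Hamilton cycles cannot arise from a group-labelling, by switching to force a constant label $g$ on every edge and concluding the biased graph would have to be $\ka{b}{n}$, hence constant on each $\delta$-class of Hamilton cycles --- is essentially the paper's argument. (One caveat: the paper derives the constant labelling directly from the balanced $1243$- and $1324$-circuits rather than invoking a ``rigidity up to switching'' theorem; the latter is not quite a correct general statement, since the same biased graph can be realised over different groups, but the direct computation you partially sketch recovers what you need.) You have also correctly reduced the problem to exhibiting more than $2^{n-9}$ \emph{free} Hamilton cycles, i.e.\ cycles $C$ such that for every chord $e$ both arc-circuits of $C+e$ are unbalanced, and your analysis of which $\Theta$-subgraphs can impose constraints is sound.

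The gap is that this reduction is where the theorem actually lives, and you do not carry it out: you explicitly defer the choice of $a$, the ``template,'' and the verification of freeness as ``where most of the work lies.'' The paper's solution takes $a=n-4$ and indexes the free cycles by the $2^{n-8}$ ordered partitions $(I,J)$ of $[n]$ with $\{1,2,n-2,n-1\}\subseteq I$ and $\{3,4,5,n\}\subseteq J$, letting $C_{I,J}$ traverse $I$ in increasing order and then $J$ in increasing order, so that $\delta(C_{I,J})=n-4$. Freeness is then a genuinely delicate check: if both arc-circuits $C_1,C_2$ of $C_{I,J}+e$ were balanced, their $\delta$-values would be multiples of $n-4$, bounded by $n-3$, and combining (with signs) to $n-4$, forcing $\{\delta(C_1),\delta(C_2)\}=\{0,n-4\}$ and hence forcing one of them to be a $4$-circuit on four \emph{consecutive} vertices of $C_{I,J}$ with $\delta=0$; the fixed elements $1,2,n-2,n-1$ and $3,4,5,n$ are placed precisely so that every such consecutive quadruple yields $\delta=2$. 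Without an explicit construction and a verification of this kind, your proposal is a plan rather than a proof: the ``forced/free'' bookkeeping, the counting, and the non-realisability step are all comparatively routine, and the entire difficulty is concentrated in the step you have left open.
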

\begin{proof}
	
	Let $\cP$ be the collection of ordered partitions $(I,J)$ of $[n]$ for which $\{1,2,n-2,n-1\} \subseteq I$ and $\{3,4,5,n\} \subseteq J$, so $|\cP| = 2^{n-8}$. Let $G = (V,E) = K_n$ and let $(G,\cB,\le) = K^{a}(n)$ where $a = n-4$.  For each $(I,J) \in \cP$, let $C_{I,J}$ be the $n$-circuit with canonical ordering $u_1,\dotsc,u_\ell,v_1, \dotsc, v_k$, where the $u_i$ are the elements of $I$ in increasing order and the $v_j$ are the elements of $J$ in increasing order. Note that $\delta(C_{I,J}) = n-4$ and therefore that $C_{I,J} \in \cB'$ for each $(I,J)$, and that the map $(I,J) \mapsto C_{I,J}$ is injective. For each $\cQ \subseteq \cP$, let $\cB_{\cQ} = \cB - \{C_{I,J}\colon (I,J) \in \cQ\}$.

	\begin{claim}
		$(G,\cB_{\cQ})$ is a biased graph for all $\cQ \subseteq \cP$.
	\end{claim}
	\begin{proof}[Subproof:]
		If not, then there is a $\Theta$-subgraph $H$ containing exactly two balanced circuits. No such subgraph exists in $(G,\cB)$, so it must be the case that $H$ contains an unbalanced $n$-circuit $C_0 = C_{I,J}$ for some $(I,J) \in \cQ$ and two other balanced circuits $C_1,C_2$; since no $\Theta$-subgraph of $K_n$ contains two $n$-circuits, we must have $C_1,C_2 \in \cB$. Let $P_0,P_1,P_2$ be the three $xy$-paths so that $E(C_i) = E(H) - E(P_i)$ for each $i \in \{0,1,2\}$. Since $C_0$ is an $n$-circuit, the path $P_0$ is a chord of $C_0$, so $|E(P_0)| = 1$ and $|E(P_1)| + |E(P_2)| = n$. As in the proof of (\ref{preservedelta}), let 
	\[\delta(P) = |\{i \in [\ell]\colon x_i > x_{i-1}\}| - |\{i \in [\ell]\colon x_i < x_{i-1}\}|\]
	for each path $P$ with vertices occuring in order $x_i\colon i \in [\ell]$; so
	\[(\delta(C_0),\delta(C_1),\delta(C_2)) = (|\delta(P_2)-\delta(P_1)|,|\delta(P_1)-\delta(P_0)|,|\delta(P_0)-\delta(P_2)|).\]
	We have $\delta(C_0) = n-4$ and, since $C_1,C_2 \in \cB$, both $\delta(C_1)$ and $\delta(C_2)$ are multiples of $n-4$. The above also gives $n-4 = \delta(C_0) = \pm \delta(C_1) \pm \delta(C_2)$. Moreover, since $\delta(C_i) \le |V(C_i)|-2 \le n-3$ for each $i \in \{1,2\}$, we must have $\{\delta(C_1),\delta(C_2)\} = \{0,n-4\}$; we may assume that $\delta(C_1) = 0$ (so $\delta(P_2) = \delta(P_0)$) and $\delta(C_2) = n-4$. Thus $n-4 = \delta(C_2) \le |V(C_2)|-2$ and so $|V(C_2)| \ge n-2$; since $\delta(C_2) \equiv |V(C_2)| \pmod 2$ and $|V(C_2)| < n$ we must have $|V(C_2)| = n-2$. Thus $|E(P_2)| = |E(H)| - |E(C_2)| = (n+1)-(n-2) = 3$ and so $C_1$ is a $4$-cycle with $\delta(C_1) = 0$. 
	
	However, the construction of $\cP$ and $C_0 = C_{I,J}$ gives that $C_0 = [u_1,u_2,\dotsc,u_{\ell},v_1,v_2,\dotsc,v_k]$ where the $u_i$ occur in increasing order with $(u_1,u_2,u_{\ell-1},u_{\ell}) = (1,2,n-2,n-1)$ and the $v_i$ occur in increasing order with $(v_1,v_2,v_3,v_k) = (3,4,5,n)$. The $4$-cycle $C_1$ satisfies $C_1 = [x_1,x_2,x_3,x_4]$ where $x_1,x_2,x_3,x_4$ are consecutive vertices in $C$. Any such quadruple of consecutive vertices is either a subsequence of $(u_1, \dotsc, u_\ell)$ or $(v_1, \dotsc, v_k)$, or is one of the quadruples $(n-2,n-1,3,4),(n-1,3,4,5),(v_{k-1},n,1,2)$ or $(n,1,2,u_3)$.  All these possibilities imply that $\delta(C_1) = 2$, a contradiction. 
	\end{proof}

	We now prove that if $\varnothing \ne \cQ \ne \cP$, then $(G,\cB_{\cQ})$ does not arise from a group-labelling. Since there are $2^{|\cP|} - 2 > 2^{2^{n-9}}$ possible $\cQ$, this will give the lemma. Suppose it does; let $\Gamma$ be a group with identity $\id$, and let $\gamma \colon E \to \Gamma$ be a labelling of the edges of some orientation $\oa{G}$ of $G$ whose balanced circuits are precisely those in $\cB_{\cQ}$. It is well-known and routine to show that applying either of the following two operations to a $\Gamma$-labelling of an orientation of a graph do not change the associated biased graph:
	\begin{enumerate}[(a)]
		\item\label{reorient} reversing the orientation of an edge, and replacing the label of that edge with its inverse in $\Gamma$;
		\item\label{scale} for a vertex $v$ and a constant $\alpha \in \Gamma$, right-multiplying the label of every edge oriented towards $v$ by $\alpha$, then left-multiplying the label of every edge oriented away from $v$ by $\alpha^{-1}$. 
	\end{enumerate}
	By possibly applying (\ref{reorient}) repeatedly, we may assume that every edge $ij$ with $i < j$ is oriented towards $j$. By then applying (\ref{scale}) with $\alpha = \gamma(1v)^{-1}$ to each vertex $v  \in \{2, \dotsc, n\}$, we may assume that $\gamma(1i) = \id$ for each $i > 1$. 
	Let $1 < p<q<r \le n$. Then the $1243$-circuit $C = [1,p,r,q]$ is balanced and so $\id = \gamma(1p)\gamma(pr)\gamma(qr)^{-1}\gamma(1q)^{-1} = \gamma(pr)\gamma(qr)^{-1}$, giving $\gamma(pr) = \gamma(qr)$. Similarly, the $1324$-circuit $C' = [1,q,p,r]$ is balanced, so $\id = \gamma(1q)\gamma(pq)^{-1}\gamma(pr)\gamma(r1)^{-1}$, giving $\gamma(pq) = \gamma(pr) = \gamma(qr)$. It follows that every pair of incident edges of $G - \{1\}$ are assigned the same label by $\Gamma$; connectedness gives that all edges of $G - 1$ are assigned the same label $g \in \Gamma$. Let $\gamma_0$ by the labelling obtained from $\gamma$ by applying (\ref{scale}) with $v = 1$ and $\alpha = g$; now $\gamma_0$ assigns a label of $g$ to every edge of $G$, and $\cB_{\cQ}$ is the set of balanced cycles of $(\gamma',\oa{G})$ by construction. But $\gamma_0$ is also a labelling of edges of $G$ with elements of the cyclic subgroup $\Gamma_0$ of $\Gamma$ generated by $g$. By the definition of $K^a(n)$ we therefore have $(G,\cB_{\cQ},\le) = K^a(n)$, where $a = 0$ if $\Gamma_0$ is infinite, and $a = |\Gamma_0|$ otherwise. This implies that a circuit $C$ of $G$ is in $\cB_{\cQ}$ if and only if $a|\delta(C)$. However, if $(I,J) \in \cQ$ and $(I',J') \in \cP-\cQ$, we have $C_{I,J} \notin \cB_Q$ and $C_{I',J'} \in \cB_{\cQ}$ while $\delta(C_{I,J}) = \delta(C_{I',J'}) = n-4$, a contradiction. 
\end{proof}

\section{References}
\newcounter{refs}

\begin{list}{[\arabic{refs}]}
{\usecounter{refs}\setlength{\leftmargin}{10mm}\setlength{\itemsep}{0mm}}

\item\label{dfp}
M. Devos, D. Funk and I. Pivotto, 
When does a biased graph come from a group labelling?,
Adv. Appl. Math. 61 (2014), 1--18. 

\item\label{zaslavsky}
T. Zaslavsky, 
Biased graphs. I. Bias, balance, and gains,
J. Combin. Theory Ser. B 47 (1989), 32--52.

\end{list}
\end{document}